\newtheorem{theorem}{Theorem}[section]
\newtheorem{proposition}[theorem]{Proposition}
\newtheorem{lemma}[theorem]{Lemma}
\newtheorem{question}{Question}
\theoremstyle{definition}
\newtheorem{definition}[theorem]{Definition}
\theoremstyle{remark}
\newtheorem{remark}[theorem]{Remark}
\newtheorem{hypo}{Hypothesis}
\numberwithin{equation}{section}
\newcommand\numberthis{\addtocounter{equation}{1}\tag{\theequation}}
\newcommand{\rev}[1]{#1}
\newcommand{\grad}{\vec \nabla}
\newcommand{\bnat}{{b_{\star}}}
\newcommand{\uh}{\textbf{h}}
\newcommand{\uA}{\underline{A}}
\newcommand{\Peps}{\mathcal{P}_0^{\varepsilon}}
\newcommand{\Spec}{{\rm Spec}}
\newcommand{\Tper}{T_{\rm per}}
\newcommand{\Tpereps}{\Tper^\varepsilon}
\newcommand{\bx}{{\bf x}}
\newcommand{\bc}{{\bf c}}
\newcommand{\by}{{\bf y}}
\newcommand{\bxi}{\xi}
\newcommand{\bv}{{\bf v}}
\newcommand{\bk}{{\bf k}}
\newcommand{\bks}{{\bf k}_{\star}}
\newcommand{\bK}{{\bf K}}
\newcommand{\bm}{{\bf m}}
\newcommand{\BL}{{\rm BL}_{\varepsilon}}
\newcommand{\bn}{{\bf n}}
\newcommand{\Meff}{M_{\rm eff}^{\varepsilon}}
\newcommand{\R}{\mathbb{R}}
\newcommand{\C}{\mathbb{C}}
\begin{document}

\title[Near Invariance in Floquet Hamiltonians]{Near invariance of quasi-energy spectrum of  Floquet Hamiltonians}

\author[]{Amir Sagiv}
\address{Department of Mathematical Sciences, New Jersey Institute of Technology, Newark, NJ 07102, USA}
\email{amir.sagiv@njit.edu}
\author[]{Michael I.\ Weinstein}
\address{Department of Applied Physics and Applied Mathematics and Department of Mathematics, Columbia University, New York, NY 10027, USA}
\email{miw2103@columbia.edu}



\date{\today}

\begin{abstract}
The spectral analysis of the unitary monodromy operator, associated with a time-periodically (parametrically) forced Schr{\"o}dinger equation, is a question of longstanding interest.   Here, we 
 consider this question for Hamiltonians of the form
 $$H^{\varepsilon}(t)=H^0 + \varepsilon^a W(\varepsilon^a t, -i\nabla)\, ,$$ where $H^0$ is an unperturbed autonomous Hamiltonian, $a\geq 1$, and $W(T,\cdot)$ has a period of $\Tper >0$. 
 In particular, in the small $\varepsilon>0$ regime, we seek a comparison
  between the spectral properties of the monodromy operator,
  the one-period flow map associated with the $H^\varepsilon(t)$ dynamics, and that of the autonomous (unforced) flow, $\exp[-iH^0 \Tper \varepsilon ^{-a}]$. 
  We consider $H^0$ which is spatially periodic on $\R ^n$ with respect to a lattice.
  Using the decomposition of $H^0$ and $H^\varepsilon(t)$ into their actions on spaces (Floquet-Bloch fibers) of pseudo-periodic functions, we establish a  spectral near-invariance property for the monodromy operator, when  acting on data which are $\varepsilon$-localized in energy and quasi-momentum. Our analysis 
  requires the following steps: {\it (i)} spectrally-localized data are approximated by {\it band-limited (Floquet-Bloch) wavepackets}; {\it (ii)} the envelope dynamics of such wavepackets is well approximated by an effective (homogenized) PDE, and {\it (iii)} an exact invariance property for band-limited Floquet-Bloch wavepackets, which follows from the effective dynamics.  
We apply our general results to a number of periodic Hamiltonians, $H^0$, of interest in the study 
of photonic and quantum materials.   
\end{abstract}

\maketitle

\section{Introduction}

We consider a class of $n$-dimensional Schr{\"o}dinger equations with time-periodic forcing, governing
 $\psi=\psi(t,\bx)$,  a complex-valued function of $\bx \in \R ^n$ and $t\in\R$:
\begin{equation}\label{eq:lsa_bal}
i\partial_t \psi  = H^{\varepsilon}(t)\psi \, , \qquad H^{\varepsilon}(t) \equiv H^0 +
\varepsilon^a W(\varepsilon^a t,-i\nabla)  \ ,
\end{equation} 
where $a\geq 1$, \rev{$H^0$ is self adjoint, and $W(T,-i\nabla)$ and $H^{\varepsilon}(t)$ are self adjoint for all $t\geq 0$ and sufficiently smooth as functions of $t$.} 
Furthermore,  $T\mapsto  W(T, \cdot )$ is periodic of period $\Tper>0$, i.e., $W(T,\cdot)=W(T+\Tper. \cdot )$ for  all $T\in \R$. Hence, 
\begin{equation} t\mapsto H^\varepsilon(t)\quad \textrm{is periodic of period}~~\Tpereps \equiv\Tper\varepsilon^{-a} \, , \qquad  a\geq 1 \, .\label{Tper}
\end{equation}
We consider \eqref{eq:lsa_bal} for  $\varepsilon > 0$ and small: the regime of small and slowly
 varying time-periodic forcing. Very briefly,  wave-packet initial-data will deform
  on a time-scale which depends on its spectral localization. The parameter $a\geq 1$ is therefore chosen so that this time-scale and the forcing period $\Tpereps\sim \varepsilon^{-a}$  are matched; see Section~\ref{discuss-main}.

Since $H^{\varepsilon}(t)$ is time-dependent (non-autonomous), the spectra
of the family of operators $\{H^\varepsilon(t)\}_{t\in \R}$ does not determine the time-dynamics \eqref{eq:lsa_bal}.  Instead, one must study one-period evolution map associated with \eqref{eq:lsa_bal}:
for initial data
 $\psi(t)\big|_{t=0}=\psi_0\in L^2(\mathbb{R}^n)$, the solution $\psi(t)\in L^2(\R^n)$ of the initial value problem  \eqref{eq:lsa_bal}  is defined by the unitary operator 
 \begin{subequations}\label{eq:mono_def}
 \begin{equation}\label{eq:propagator} \psi^\varepsilon(t)=U^\varepsilon(t)\psi_0 \, .
 \end{equation}
The one-period evolution map, or {\it monodromy operator,} is the unitary operator
 \begin{equation}
 M^{\varepsilon} \equiv U^{\varepsilon} (\Tpereps): L^2(\R^n)\to L^2(\R^n) \, . \label{Meps}
 \end{equation}
 \end{subequations}
 
The driven and undriven problems can be compared by viewing  the autonomous case $W = 0$, i.e., the dynamics of  $ i\partial_t \psi = H^0 \psi$, 
as having (trivial) $\Tpereps$ periodicity. The associated solution operator is thus $ e^{-iH^0t}$ and the  monodromy operator is 
\begin{equation} M^{\varepsilon}_0=e^{-iH^0\Tpereps}\ .\label{M0eps}\end{equation} 
The spectrum of $M^{\varepsilon}_0$ acting in $L^2(\R^n)$ has a simple relation to the spectrum of $H^0$:
\begin{equation}\label{eq:quasienergy}  \Spec _{L^2(\R ^n)} (M^{\varepsilon }_0) = \left\{ e^{-iE  \Tpereps}  ~~ | ~~ E\in \Spec _{L^2 (\R ^n)}(H^0) \right\} .
\end{equation}
This relation motivates the notion of {\it quasi-energy}:
A point on the spectrum of the monodromy operator $z\in \Spec_{L^2 (\R ^n)} (M^{\varepsilon})\subseteq S^1$ can be written as $z=e^{-i\Tpereps \nu}$. The  phase $\Tpereps \nu$ is called a Floquet exponent, and  $\nu\in \R/2\pi\mathbb{Z}$ is called  a {\em quasi-energy}. We ask:
\begin{question}\label{q1}
What is the relation between the spectrum of the monodromy operator $M^{\varepsilon}$ and that of $M^{\varepsilon}_0$, arising from the non-trivial time-periodic forcing?
\end{question}

In general time-periodic settings, beyond being unitary, very little is known about the spectrum of the monodromy operator. Note in particular that, even though the parametric forcing is slow in time, the study of Question \ref{q1} is not covered by standard adiabatic theory, to the best of our knowledge; see discussion of Sec.\ \ref{sec:lit}.

In this paper, we gain insight on this question
  for the class of operators $M^{\varepsilon}$, where the unperturbed Hamiltonian, $H^0$, is periodic with respect to spatial translations in a lattice $\Lambda\subset\mathbb{R}_\bx^n$, i.e., $V(\bx+ \bv)=V(\bx)$ for all $\bv\in \Lambda$ and all $\bx\in\mathbb{R}^n$. Since $H^0$ commutes with $\Lambda$-translations, it may be decomposed into its action on distinct spaces of $\bk-$ pseudo-periodic functions, where $\bk$ varies over the {\it Brillouin zone}; see Section \ref{sec:FB}. This decomposition allows us to formulate and address a spectrally-local version of Question \ref{q1}:
 \begin{question}\label{q:piece_bal}
What is the relation between the spectrum of the monodromy operator $M^{\varepsilon}$ and that of $M^{\varepsilon}_0$, when restricted to $L^2(\R^n)$ data concentrated near an energy  $E_\star$ and quasi-momentum $\bks$? 
\end{question}

 \subsection{Discussion of main results}\label{discuss-main}
 We begin by discussing more specifically the type of spectral localization we have in mind.  First, fix a general quasi-momentum $\bk_\star$ and energy $E_\star$,  and consider initial data which is ``$\varepsilon-$  spectrally localized'' with respect to $H^0$-- a Bloch wave-packet of band-width $\varepsilon$ (see Proposition \ref{prop:BLproj}). The (unforced) evolution of such data by $U^0(t)=e^{-iH^0t}$, on large finite time-scales, has the structure of a slowly-varying spatial and temporal modulation
 of Bloch modes with energy $E_\star$ and quasi-momentum $\bk_\star$. 
The slow evolution
is described by an {\it effective Hamiltonian}, which is determined by 
  the local character  of the band structure (energy dispersion curves and eigenspaces) near $(\bk_\star,E_\star)$. The effective Hamiltonian captures the
  transport and spreading dynamics of such data. \rev{This work utilizes the existence of an effective Hamiltonian (and indeed, many known results, see Sec.\ \ref{sec:examples}) to answer Question \ref{q:piece_bal}.}
  
{\it We choose the forcing time-scale of our time-dependent Hamiltonian, $H^\varepsilon(t)$, so that there is  a non-trivial interplay between the unforced transport dynamics and the effect of time-dependent forcing.}
  \footnote{Such balancing corresponds to what is typically done in experiments. For example, a  wavepacket excitation is  designed by a choice of a laser frequency and pulse band-width, and balanced with  forcing to measure effects on experimentally accessible  spatial and temporal scales.}
  The parameter $a\geq 1$ in \eqref{eq:lsa_bal} is  chosen to achieve this balance of effects.
For example, the choice $a=1$ in \eqref{eq:lsa_bal} is appropriate for the dynamics
 where our Bloch wavepacket is concentrated near a point where the dispersion is locally linear, thus allowing for simple transport or conical diffraction, for example. In Section \ref{sec:examples}
  we discuss a number of examples.

A key to our analysis is  the space $\BL$ of band-limited wavepackets (Definition~\ref{def:BL}):  the space of Fourier band-limited envelope  modulations of Bloch modes ($\bk_\star$ pseudo-periodic eigenstates) of $H^0$ with energy $E_\star$.
 $\BL$ states are good approximations to $\varepsilon-$  spectrally localized a Floquet-Bloch wavepackets of band-width~$\varepsilon$ (Proposition \ref{prop:BLproj}).

Let $\Pi^\varepsilon$ denote the spectral \rev{projection-valued} measure associated with the unitary operator $M^{\varepsilon}$,  \rev{whose spectrum is on the unit circle $S^1$}
 (see Section \ref{sec:prelim});  let $\Peps$ be the projection onto the subspace of $L^2(\R^n)$, which is the space of functions which are $\varepsilon-$  localized, with respect to $H^0$, in quasi-momentum and energy about $(\bk_\star,E_\star)$. In particular, note that by applying the spectral theorem to $H_0$, then $\Peps M_0^\varepsilon= M_0^\varepsilon \Peps$.
  Our main {\it near-invariance} results are:
\begin{enumerate} 
\item  Theorem \ref{thm:Peps_e3}\ [Near invariance on ${\rm range}(\Peps)$]
Let $\mathcal{I}\subset S^1$ denote any arc such that the spectrum of $M^{\varepsilon}_0 \circ \Peps$ is contained in $\mathcal{I}$.\footnote{\rev{For linear operators with compatible domains we write composition as
$A\circ B$, so $(A\circ B)u := A(Bu)$.}} Then,  for $\varepsilon>0$ sufficiently small, 
$$ \Pi^{\varepsilon}\left[\mathcal{I}\right]\circ \Peps = \Peps  + \mathcal{O}_{B(L^2 (\R ^n))}(\varepsilon ^{n+1}) $$
or equivalently $ \Pi^{\varepsilon}\left[S^1 \setminus \mathcal{I}\right] \circ \Peps = \mathcal{O}_{B(L^2 (\R ^n))} (\varepsilon ^{n+1}) \, .$
 {\em This provides an answer to Question~\ref{q:piece_bal}:} when states in the range of  $\Peps$  evolve under the forced monodromy operator, $M^{\varepsilon}$, the resulting state has very small projection onto quasi-energies far from their quasi-energies under $M^{\varepsilon}_0=\exp(-iH_0 \Tpereps)$. Equivalently, denoting the spectral measure of $M^{\varepsilon}_0$ by $\Pi^{\varepsilon}_0$, 
 $$\left(\Pi^{\varepsilon}[\mathcal{I}]-\Pi^{\varepsilon}_0[\mathcal{I}]\right)\circ \Peps = \mathcal{O}_{B(L^2(\R^n))}(\varepsilon^{n+1}) \, .$$
 
 \end{enumerate}

\noindent 
Underlying the proof of Theorem \ref{thm:Peps_e3} is the following strict invariance result on $\BL$,  the space of band-limited wave-packets:
\begin{enumerate}[resume]
 \item 
  Theorem \ref{thm:main_bal}\ [Strict invariance on $\BL$] Let $\mathcal{I}$ be as in Theorem \ref{thm:Peps_e3}. Then, for all $\varepsilon>0$ and sufficiently small, we have the following {\em strict} invariance property for the evolution of $\BL$  under the flow of $M^{\varepsilon}$:
  $$\Pi^{\varepsilon}\left[\mathcal{I}\right] \circ {\rm Proj}_{\BL}  = {\rm Proj}_{\BL}, $$
or equivalently,  $\Pi^{\varepsilon}\left[S^1 \setminus \mathcal{I}\right] \circ {\rm Proj}_{\BL}  = 0$. 
\end{enumerate}

\begin{remark}\label{rem:O1effect} \rev{The results above are not captured by naive perturbative expansions of the propagator. Although the forcing term in (1.1) has magnitude \(\varepsilon^{a}\), the monodromy \(M^{\varepsilon}\) is defined by evolution over one period \(T_{\rm per}^{\varepsilon}\sim \varepsilon^{-a}\). Thus, an a priori estimate based solely on coefficient size and period does not guarantee a small correction to \(M^{\varepsilon}\). The relevant control arises from the existence of an effective Hamiltonian approximating the slow (envelope) dynamics. Our main theorem shows that the consequence such an approximation, on the spectral window of interest, is that the associated remainder is small in the sense pertinent to quasi-energies.}
\end{remark}
\bigskip

\subsection{Relevant analytical work on temporally-forced Hamiltonian PDEs}\label{sec:lit} 

In the present article, we give a novel perspective on the $L^2(\R^n)$ spectrum of parametrically forced Schr{\"o}dinger equations. \rev{Our approach is part of an ongoing interest in the wave-packet dynamics of non-autonomous (in time) and periodic (in space) Schr{\"o}dinger equations \cite{hameedi2022radiative, sagiv2022effective} and their associated non-autonomous homogenized dynamics \cite{bal2021multiscale, kraisler2025time}. This mathematical effort track the rapid development in the experimental study temporally-driven material, a category known as ``Floquet media.'' The techniques to implement this paradigm has been demonstrated in condensed-matter physics \cite{krieger1986time, perez2014floquet, wang2013observation}, photonics \cite{ozawa2019topological, Rechtsman-etal:13}, acoustics \cite{xue2022topological}, and more.}
Here, we draw connections and distinctions between our result and other approaches to similar problems. 
\subsubsection*{Reducibility} 
By translation invariance with respect to a lattice (see Section \ref{sec:FB}), the dynamics of \eqref{eq:lsa_bal} can reduce to the spectral study of the   {\em Floquet Hamiltonian} $$\mathcal{K}  \equiv i\partial_t - H^{\varepsilon} (t)  \, ,$$
over the family of spaces $\{ L^2 (S^1 ; L^2_{\bk}) \}_{\bk \in \mathcal{B}}$.
For each fixed $\bk$, spectral problems of this latter type correspond to time-periodically forced wave equations on the spatial torus. By constructing a change of variables which approximately maps 
 the original Hamiltonian to an autonomous Hamiltonian (reducibility),  it is shown that $\mathcal{K}(t)$ has pure point spectra, with quantitative control on the effect of the forcing on the unperturbed point spectrum  \cite{bambusi2001time, bambusi2017reduce, eliasson2008reducibility,  feola2020reducibility, howland1989floquetII, montalto2021linear}. For these results to hold, one needs strong assumptions regarding the growth of point eigenvalues of $H^0$. For Schr{\"o}dinger operators, the Weyl asymptotics imply that these growth assumptions are only satisfied for spatial dimension $n=1$. An exception to that is  \cite{eliasson2008reducibility},  which establish analogous results for $n\geq 2 $ for the case where $|V(x)|$ is assumed to be sufficiently small.  To the best of our knowledge, beyond these works, the nature of the spectrum of $\mathcal{K}$ remains an open problem~\cite{montalto2021linear}.

\subsubsection*{Adiabatic theory}
\rev{Adiabatic theory studies slowly varying Hamiltonians of the form
\begin{equation}\label{eq:adiabatic}
i\,\partial_t \psi^\varepsilon(t)=H(\varepsilon t)\,\psi^\varepsilon(t), \qquad 0<\varepsilon\ll1.
\end{equation}
Here $s\mapsto H(s)$ is a sufficiently regular curve of self-adjoint operators on a Hilbert space $\mathcal H$. A common hypothesis is the existence, for each $s$, of an isolated spectral subspace of $H(s)$ with associated projector $P(s)$, where the map $s\mapsto P(s)$ is smooth in an appropriate sense. Under such assumptions, if $\psi^\varepsilon(0)\in\operatorname{Ran}P(0)$, then for times $ t= O(\varepsilon^{-1})$ the solution remains close to $\operatorname{Ran}P(\varepsilon t)$; the error order depends on the regularity of $H$ (see, e.g., \cite{avron1987adiabatic,davies1978open,de2022locobatic,garrido1964generalized,hagedorn2002elementary, henheik2022adiabatic, joye2022adiabatic,kato1950adiabatic,nenciu1980adiabatic,nenciu1993linear}).

The conclusions of adiabatic theorems concern transport along moving spectral subspaces for $H(\varepsilon t)$. They do not, in general, yield statements about the \emph{spectrum of the unitary monodromy} associated with a time–periodic evolution. Our results address this different question (Question \ref{q:piece_bal}), and use a different mechanism -- the existence of an effective Hamiltonian $H_{\rm eff}$ governing the slow dynamics.}

\subsection{Outline of the paper.}\label{sec:outline}  In Section \ref{sec:prelim} we provide the necessary background on Floquet-Bloch theory of periodic Hamiltonians, the spectral theorem for unitary operators, and introduce relevant notation. Section \ref{Eb-loc} presents a brief intuitive introduction to effective dynamics and homogenization of periodic Hamiltonians. The main results of this article are presented in Section \ref{sec:main-results}. In Section \ref{sec:examples} we demonstrate how these results apply to a number of specific periodic Hamiltonians, $H^0$ and the associated $H^\varepsilon$, of physical interest. The proofs of the main results are presented in Section \ref{sec:pf_main}. Finally, a formal derivation of an effective transport equation (see Section \ref{sec:lin}) is presented in Section~\ref{sec:transport}.
\subsection{Acknowledgments}
AS would like to thank P.\ Kuchment and V.\ Rom-Kedar, whose questions inspired this research. 
This research was supported in part by National Science Foundation grants, DMS-2508811 (AS), 
DMS-1620418, DMS-1908657 and DMS-1937254 (MIW),  the Simons Foundation Math + X Investigator Award \#376319 (MIW, AS), the Binational Science Foundation grant \#2022254 (MIW, AS), and the AMS-Simons Travel Grant (AS).
\section{Mathematical Preliminaries}\label{sec:prelim}
 \subsection{Floquet-Bloch theory}\label{sec:FB}

We consider Hamiltonians $H^0=-\Delta+V$, where  $V$ is periodic with respect to a lattice $\Lambda=\mathbb{Z}\bv_1\oplus\cdots\oplus\mathbb{Z}\bv_n$, where  $\{\bv_1,\dots,\bv_n\}$ is a linearly independent set of vectors in $\mathbb{R}^n$.
\footnote{We believe our analysis can  be extended to general classes of elliptic operators (scalar Hamiltonians and systems)  $H^0$,
whose coefficients are periodic with respect to a lattice; see \cite{Kuchment:16}.}
 Since $H^0$ commutes with lattice translations,  the Hamiltonian $H^0$ admits a fiber-decomposition $H^0 =\int^\oplus_{\mathcal{B}} H_\bk^0 \, d\bk$, where $H^0_\bk$ denotes the operator
 $H^0$ acting in the space $L^2_\bk$, consisting of  $L^2_{\rm loc}$  functions which are $\bk-$ pseudoperiodic, {\it i.e.} $\psi\in L^2_\bk$ if $\psi(\bx+\bv)=e^{i\bk\cdot\bv}\psi(\bx)$ a.e. in $\bx\in\mathbb{R}^2$.  The set $\mathcal{B}$ is
  the Brillouin zone, a choice of fundamental cell in $(\mathbb{R}_\bx^n)^*=\mathbb{R}^n_\bk$. For each quasimomentum, $\bk \in \mathcal{B}$, $H_{\bk}^0$ is self-adjoint and has a compact resolvent. Hence, for each $\bk$,  $H_{\bk}^0$ has a real sequence of discrete eigenvalues of finite multiplicity
  $$
  E_1 (\bk)\le E_2(\bk)\le \dots\le E_b(\bk)\le\dots \, ,
$$ tending to infinity. The corresponding $L^2_\bk$ eigenfunctions, denoted by $\Phi_b (\bx; \bk)$: 
  \[ H^0 \Phi_b (\bx; \bk) = E_b(\bk) \Phi_b (\bx; \bk)\, ,\qquad  \bx\mapsto \Phi_b(\bx,\bk) \in L^2_\bk \, ,\]
  or equivalently $\bx\mapsto e^{-i\bk\cdot\bx}\Phi_b(\bx,\bk)\in L^2(\R^n/\Lambda)$.  
  These eigenfunctions may be chosen to form an orthonormal basis for $L^2_\bk$.  Each function $\bk \mapsto E_b (\bk)$ is continuous and piecewise analytic \cite[Theorem 5.5]{Kuchment:12}, and hence Lipschitz continuous. 
  
 Each image, $E_b(\mathcal{B})$, is a subinterval of $\R$ called  the $b^{\rm th}$ {\em spectral band.}
   The graphs of $\bk\mapsto E_b(\bk)$ are called  {\em dispersion surfaces}.  The collection of all pairs $(\bk,E_b(\bk))$ and corresponding normalized $L^2_\bk$ eigenfunctions, 
$\Phi_b (\bx; \bk)$, 
 is called the {\it band structure} of $H^0$. 
Finally,  the family of Floquet-Bloch modes $\cup_{\bk\in\mathcal{B}}\{\Phi_b(\cdot,\bk)\}_{b\ge1}$ is complete in $L^2(\R^n)$;
for any $f\in L^2(\R^n)$,
$$ f(\bx) = \frac{1}{{\rm vol}(\mathcal{B})}\sum_{b\ge1}\int_{\mathcal{B}} \left\langle \Phi_b(\cdot,\bk),f \right\rangle_{L^2(\R^n)} \Phi_b(\bx,\bk) d\bk \, , $$
where the sum is interpreted as a convergence of partial sums in $L^2(\R^n)$. For simplicity, we will assume henceforward that ${\rm vol}(\mathcal{B})=1$.
\subsection{$U^\varepsilon(t)$, $M^\varepsilon$ and its associated spectral measure}\label{sec:spectral_thm}

In  \eqref{eq:propagator} we introduced the (unitary in $L^2 (\R ^n)$) evolution $U^{\varepsilon}(t)$  associated with the dynamics \eqref{eq:lsa_bal}.  In this section we give a brief outline of a construction of $U^\varepsilon(t)$ and then discuss the spectral measure of the associated monodromy operator. Under very general assumptions on $H^0$ and the operators $\{W(t,\cdot)\}$, a unitary propagator can be shown to exist, \rev{see e.g., \cite{howland1974stationary, yajima1987existence}.}

From $U^{0}(t)=e^{-iH^0t}$ 
 and  $U^{\varepsilon}(t)$, we obtain the unitary monodromy operators  $M_0 ^{\varepsilon}$ and $M^{\varepsilon}$ which, by the spectral theorem, are equipped with associated  spectral (projection-valued) measures $\Pi^{\varepsilon}_0$ and $\Pi^{\varepsilon}$, respectively. For completeness, we review the definition and properties of a spectral measure and the spectral theorem for unitary operators. We refer the reader to \cite{davies1996spectral, hall2013quantum, RS4, taylor2013partial} for details.

Let $\mathcal{H}$ be a Hilbert space, let $X$ be a set, and $\Sigma$ a $\sigma$-algebra in $X$. A map $\Pi:\Sigma \to B(\mathcal{H})$, the Banach space of bounded linear operators on $\mathcal{H}$, is called a {\em projection-valued measure} if the following properties hold:
\begin{enumerate}
\item $\Pi(I)$ is an orthogonal projection for every $I\in \Sigma$.
\item $\Pi(\emptyset)=0$ and $\Pi(X) = {\rm Id}$.
\item If $\{I_j\}_{j\geq 1} \subset \Sigma$ are disjoint then $$\Pi \left( \bigcup\limits_{j\geq 1} I_j\right) v = \sum\limits_{j\geq 1} \Pi(I_j) v \, , \qquad v\in \mathcal{H} \, .$$
\item $\Pi (I_1 \cap I_2) = \Pi (I_1)\Pi(I_2)$ for all $I_1, I_2 \in \Sigma$.
\end{enumerate}
 \begin{theorem}
Let $U$ be a unitary operator on $\mathcal{H}$. There exists a unique projection-valued measure $\Pi = \Pi_U$ on the Borel $\sigma$-algebra of $S^1$, which contains the spectrum of $U$,  such that for every $f\in \mathcal{H}$
$$\int\limits_{S^1} z \, d\Pi(z) f = Uf \, .$$
\end{theorem}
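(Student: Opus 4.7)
The plan is to build $\Pi$ via the continuous functional calculus for $U$ and then extend from continuous to Borel functions. First, I would define a $*$-homomorphism $\Phi\colon C(S^1) \to B(\mathcal{H})$ on Laurent polynomials $p(z)=\sum_{|n|\le N} a_n z^n$ by setting $\Phi(p)=\sum a_n U^n$, using $U^{-1}=U^*$ to make sense of negative powers. The key estimate is $\|\Phi(p)\|=\sup_{z\in \Spec(U)}|p(z)|$, which follows from the spectral radius formula applied to the normal operator $\Phi(p)^*\Phi(p)=\Phi(|p|^2)$ together with polynomial spectral mapping. Since $\Spec(U)\subseteq S^1$, this is bounded by $\|p\|_{C(S^1)}$, and Stone--Weierstrass (Laurent polynomials separate points on $S^1$ and form a $*$-subalgebra containing constants) gives uniform density in $C(S^1)$, so $\Phi$ extends continuously to a $*$-homomorphism on $C(S^1)$.

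Second, for each $v\in\mathcal{H}$, the map $f\mapsto \langle v,\Phi(f)v\rangle$ is a positive linear functional on $C(S^1)$ of norm $\|v\|^2$; Riesz--Markov then produces a unique finite positive Borel measure $\mu_v$ on $S^1$ with $\langle v,\Phi(f)v\rangle=\int f\,d\mu_v$. Polarization,
\[
\mu_{v,w} \;:=\; \tfrac{1}{4}\sum_{k=0}^{3} i^{k}\, \mu_{v+i^{k}w},
\]
yields complex Borel measures satisfying $\langle w,\Phi(f)v\rangle=\int f\,d\mu_{v,w}$ and the uniform bound $|\mu_{v,w}|(S^1)\le \|v\|\|w\|$. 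For any bounded Borel $g$, the sesquilinear form $(v,w)\mapsto \int g\,d\mu_{v,w}$ is then bounded, so there exists a unique operator $\widehat{\Phi}(g)\in B(\mathcal{H})$ realizing it. This extension is where the argument is most delicate: one must verify that $\widehat{\Phi}$ remains a $*$-homomorphism, which requires approximating $g$ by continuous functions in $L^1(\mu_{v,w})$ (by Lusin's theorem or standard monotone-class arguments) and passing to the limit via bounded convergence, since convergence here is strong-operator rather than norm.

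Third, I would set $\Pi(I):=\widehat{\Phi}(\chi_I)$ for each Borel $I\subseteq S^1$ and verify the four defining properties. Self-adjointness and idempotence follow from $\overline{\chi_I}=\chi_I$ and $\chi_I^{2}=\chi_I$ combined with the homomorphism property of $\widehat{\Phi}$; the normalization $\Pi(\emptyset)=0$, $\Pi(S^1)=\mathrm{Id}$ is immediate from $\widehat{\Phi}(1)=\mathrm{Id}$; property (4) follows from $\chi_{I_1\cap I_2}=\chi_{I_1}\chi_{I_2}$; and the $\sigma$-additivity in (3) reduces, via the sesquilinear form, to the $\sigma$-additivity of each scalar measure $\mu_{v,w}$. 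Finally, choosing $g(z)=z$ gives $\int_{S^1} z\,d\Pi(z)=\widehat{\Phi}(\mathrm{id})=U$.

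For uniqueness, any projection-valued measure $\Pi'$ satisfying the integral identity satisfies $\int z^{n}\,d\Pi'=U^{n}$ for every $n\in\mathbb{Z}$ (using self-adjointness of $\Pi'$ and $U^*=U^{-1}$), hence $\int p\,d\Pi'=\Phi(p)$ on all Laurent polynomials. The scalar measures $\mu'_{v,w}$ associated with $\Pi'$ then agree with $\mu_{v,w}$ on a uniformly dense subalgebra of $C(S^1)$, so they coincide by Riesz--Markov uniqueness, forcing $\Pi'=\Pi$. The main obstacle throughout is the Borel extension step and its countable-additivity verification; once that machinery is in place, everything else is a routine check.
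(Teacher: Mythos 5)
Your construction is correct and is exactly the standard argument found in the references the paper cites for this background result (the paper itself states the theorem without proof): continuous functional calculus on Laurent polynomials via the spectral-radius/norm identity and Stone--Weierstrass, Riesz--Markov plus polarization to get the scalar measures $\mu_{v,w}$, Borel extension of the homomorphism, and uniqueness from density of trigonometric polynomials. The only steps deserving the extra care you already flag are the multiplicativity of the Borel extension and the fact that $\sigma$-additivity of $\Pi$ holds in the strong (not norm) operator topology, which follows from the scalar additivity together with orthogonality of the ranges $\Pi(I_j)$.
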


\subsection{Notation and conventions}

\begin{enumerate}
\item We adopt the convention of considering all $\C^N$ vectors as column vectors.
If $a,b\in\C^N$, $a^\top b= a\cdot b$.
\item  Fourier transform on $L^2(\R^n)$: For a function, $f$, defined on $\R^n$, define its  Fourier transform as
\[
\hat{f}(\xi) =\mathcal{F}[f](\xi)=\hat{f}(\xi) \equiv \rev{\frac{1}{(2\pi)^{n/2}}}\int\limits_{\R ^n} f(x) e^{- i \xi \cdot x} \, d x. \, \]
  Furthermore, introduce
\[
\check{g}(x) \equiv \rev{\frac{1}{(2\pi)^{n/2}}}\int\limits_{\R ^n} g(\xi) e^{i x \cdot \xi} \, d\xi .
\]
The mappings $f\mapsto \hat{f}$ and $g\mapsto\check{g}$ map Schwartz class, $\mathcal{S}(\R^n)$, to itself and for all $f \in \mathcal{S}(\R^n)$, we have the Plancherel identity  $\|f\|_{L^2 (\R ^n)} = \|\hat{f}\|_{L^2 (\R ^n)}$ and the inversion formula $\left(\hat{f}\right)^{\check{}}=f$. Hence, $\check{f}=\mathcal{F}^{-1}f$ on 
$\mathcal{S}(\R^n)$.  By density, both mappings extend to bounded linear transformations on $L^2(\R^n)$ which satisfy the Plancherel identity, and the inversion formula.
\item Pauli matrices are given by $\sigma_0 = {\rm I}$, and 
$$ \sigma_1 = \left(\begin{array}{cc}
0 &1 \\ 1 &0
\end{array} \right) \, , \qquad \sigma_2 = \left(\begin{array}{cc}
0 &-i \\ i &0
\end{array} \right) \, ,\qquad \sigma_3 = \left(\begin{array}{cc}
1&0  \\ 0& -1
\end{array} \right) \, . $$
\item For a vector $\bv=(v_1,v_2)\in\C^2$, we write $(v_1,v_2)\cdot(\sigma_1,\sigma_2)=v_1\sigma_1+v_2\sigma_2$.
\end{enumerate}

\section{Wavepackets, the geometry of dispersion surfaces, and periodic homogenization}\label{Eb-loc}

Our spectrally local formulation concerning the quasi-energy spectrum $H^\varepsilon(t)$, Question  \ref{q:piece_bal}. is a natural relaxation of Question \ref{q1}.
 In physical settings, a crystalline structure is  experimentally probed in a narrow spectral range, {\it e.g.}
a bulk material is externally excited  (e.g. electrically, optically, elastically, acoustically). Such settings induce the propagation of spectrally localized wavepackets (quasi-particles), 
whose envelope dynamics are given  by a simplified effective Hamiltonian. 

To illustrate this last point and how effective Hamiltonians emerge, consider  the following ``toy model'' of continuously translation-invariant and time-periodically forced Hamiltonian dynamics governing a wave-field $\psi=\psi(t,\bx)$:
\begin{equation}
\begin{cases}
    i\partial_t\psi (t,\bx) &= E(-i\nabla)\psi \ + \ \varepsilon^a\rev{\mathcal{Q}}(\varepsilon^a t)\psi \\
    \psi(0,\bx) &=\psi_0(\bx) \, ,
    \end{cases}
 \label{toy}
\end{equation}
where $\psi_0$ is sufficiently smooth and localized on $\R^n$ \rev{and $\mathcal{Q}$ is a real integrable function}.
The real-valued {\it dispersion relation}  $\bxi \mapsto E(\bxi)$ is, for simplicity, taken to be smooth.
Clearly, an explicit solution can be given in terms of the Fourier transform, but our goal here will be to 
discuss the notion of effective dynamics. 
 
Consider initial data whose Fourier transform is concentrated near  $\bxi_{\star} \in \R ^n$: 
\[\widehat{\psi^\varepsilon_0}(\bxi) = \varepsilon^{-n}\widehat\Psi_0(\varepsilon^{-1}(\bxi-\bxi_{\star})) \, , \qquad \widehat\Psi_0\in\mathcal{S}(\R^n) \, ,\ 0<\varepsilon\ll1.\]
The solution of the initial value problem \eqref{toy} may be written as:
\[ \psi^\varepsilon(t, \bx) = \rev{\frac{1}{(2\pi)^{n/2}}}e^{i(\bxi_{\star}\cdot\bx-E(\bxi_{\star})t)} \int e^{i\left( [E(\bxi_{\star}+\varepsilon\tilde{\bxi})-E(\bxi_{\star})]t +\varepsilon\tilde{\bxi}\cdot\bx + \Theta(\varepsilon^a t) \right]}\widehat\Psi_0(\tilde{\bxi}) d\tilde{\bxi},
\]
where 
$ \Theta(T) \equiv \int_0^T \rev{\mathcal{Q}}(s) ds$.

 If $\nabla E(\bxi_{\star})\ne0$, then by Taylor expansion of $E(\bxi)$ about $\bxi_{\star}$, 
we obtain the following approximation of the solution $\psi_\varepsilon(t, \bx)$ of \eqref{toy} with $a=1$,  which is valid on the time scale:  $0\le t\lesssim \varepsilon^{-1}$:
\[\psi_\varepsilon(t, \bx) \approx  e^{i(\bxi_{\star}\cdot\bx-E(\bxi_{\star})t)} \cdot  B(\varepsilon t, \varepsilon\bx),\  
\]
where the envelope $B(T,X)$ is governed by a driven transport equation
\[ i\partial_T B(T,X) =  \left[\ i\nabla_{\bxi} E(\bxi_{\star}) \cdot \nabla_X \ +\ \rev{\mathcal{Q}(T)}\right]B(T,X) \, .\]

If, on the other hand, $\nabla E(\bxi_{\star})=0$ and $D_\bk^2 E(\bxi_{\star})$, the $n\times n$ Hessian matrix, is non-singular, then 
we obtain the following approximate solution $\psi_\varepsilon(t, \bx)$ of \eqref{toy} with $a=2$, which is valid on the time scale:  $0\le t\lesssim \varepsilon^{-2}$:
\[  \psi_\varepsilon(t, \bx) \approx   e^{i(\bxi_{\star}\cdot\bx-E(\bxi_{\star})t)} \cdot B(\varepsilon^2 t, \varepsilon\bx) \, , \]
where $B(T,X)$ satisfies an (generally anisotropic) effective Schr{\"o}dinger equation:
\[ i\partial_TB(T,X)  =  \left[\ \nabla_X\cdot \frac12 D_{\bxi}^2 E(\bxi _{\star}) \nabla_X\ +\ \rev{\mathcal{Q}(T)}\right]B(T,X) \, .\]

In each case, the function $B(T,X)$, which provides the slow envelope evolution,  is governed by a  time-dependent effective Hamiltonian: 
\begin{equation}
i\partial_T B(T,X) = H_{\rm eff}(-i\nabla,T) B(T,X),\label{eff-dyn}
\end{equation}
in which both the effects of deformation under $H^0$ and temporal forcing are captured. Note also
that  $H_{\rm eff}(-i\nabla,T)$  commutes with continuous spatial translations and therefore
can be analyzed using the Fourier transform. 

 In general, for  spatially homogeneous media and for the case of crystalline (lattice periodic) media described by $H^0$, which is invariant under discrete translations in a lattice, the dispersion relation eigenvalue-branches 
 may be degenerate. At such degeneracies the dispersion relations $\bk\mapsto E_b(\bk)$ may not be smooth,
 although they are Lipschitz continuous  if $H^0$ is self-adjoint. Furthermore, in such cases, the eigenvector maps $\bk\mapsto\Phi_b(\bx;\bk)$ may even be multivalued.  \footnote{
In this paper, we discuss only isolated point degeneracies. Other types of band degeneracies may arise.
Examples are (i) the touching of two bands along a submanifold of quasi-momenta due the underlying symmetries and (ii) degeneracies of infinite multiplicity such as ``flat bands,'' as in e.g., the Landau Hamiltonian \cite{HK:10}. We do not treat these situations in the present work.} Nevertheless, Fourier-type analysis
(based on Floquet-Bloch modes) and multiple-scale / homogenization methods can
  be used to rigorously derive, with accompanying error bounds,  effective envelope dynamics. Examples are
  \begin{enumerate}
\item[(i)] effective mass Schr{\"o}dinger equations \cite{allaire2005homogenization, hoefer2011defect} when $E_\star$ corresponding is at an isolated band edge, at which the dispersion surface is generically quadratic,
\item[(ii)] effective Dirac equations (with time-independent and time-dependent Hamiltonians) for dispersion surfaces touching conically  (Dirac points) \cite{FW:14, hameedi2022radiative, sagiv2022effective},
\item[(iii)]  effective matrix-Schr{\"o}dinger equations, for quadratically degenerate dispersion surfaces \cite{keller2018spectral}, and
\item[(iv)] effective Dirac operators of magnetic  type for non-uniform spatial deformations of honeycomb media \cite{GRW:21}
\end{enumerate}

\section{Main results}\label{sec:main-results}

\subsection{Hypotheses and definitions} 
  Our first assumption concerns the character of the energy band structure near $(\bk_\star,E_\star)$;
  in particular if  $(\bk_\star,E_\star)$ is a degeneracy, then this degeneracy is isolated:
  \begin{hypo}[Spectral separation]\label{hyp:deg} 
  Let $(\bk_\star,E_\star)$ be such that 
$H^0_{\bk_\star}$ has an eigenvalue $E_\star$ of multiplicity $N\ge1$, i.e., for some $\bnat \geq 1$
 \begin{equation}\label{eq:degeneracy}
   E_{\bnat -1} (\bks) < E_\star=E_{\bnat}(\bks)= E_{\bnat+1} (\bks)=\dots= E_{\bnat+N-1}(\bks)< E_{\bnat +N}(\bks) \, .
   \end{equation}
Furthermore,  $(\bks,E_\star)$ is isolated  in the band structure in the sense that
\[ E_{\bnat-1}(\bk) < E_{\star} < E_{\bnat+N}(\bk)\]
for all $\bk$ in an open neighborhood of the quasimomentum $\bks$.
Introduce an  \rev{orthonormal} basis for the degenerate eigenspace: 
\[ \{ \Phi_b(\bx,\bks) ~~ |~~ b_{\star}\leq b \leq b_{\star}+N-1 \} \, .\]
\end{hypo}
With Question \ref{q:piece_bal} in mind and assuming spectral separation as defined in Hypothesis \ref{hyp:deg}, we define a projection, $\Peps$,  associated with a subspace of $L^2(\R^n)$
consisting of states, which are superpositions of modes whose quasimomenta and energy  are near  $(\bks, E_{\star})$:
\begin{equation}\label{eq:Pdef}\Peps \equiv \int\limits_{|\bk-\bks|<\varepsilon} {\rm Proj}\left(\left|H^0_{\bk}-E_{\star}\right|< L \varepsilon\right) \, d\bk \, ,
\end{equation}
where $L>0$ is fixed. \rev{Here and henceforward, such integrals over $k$ are to be understood in the sense of the Floquet-Bloch deomposition of $L^2(\mathbb{R}^n)$.}

We next present two additional assumptions concerning the underlying wave-packet dynamics. 
Let $(\bks,E_\star)$ satisfy the spectral separation Hypothesis \ref{hyp:deg} 
with parameters $N\ge1$ and $\rev{b_\star}\ge1$. 
Denote the vector 
 of degenerate Floquet-Bloch modes:
\[ \Phi_\star(\bx) \equiv \left(\begin{array}{c}
\Phi_{b_\star} (\bx; \bks) \\ \vdots \\ \Phi_{\bnat+N-1} (\bx; \bks)  
\end{array} \right) \ .\]

We next introduce the subspace of $L^2(\R^n)$,
 consisting of Fourier band-limited wave-packets, which are  
modulations $\Phi_\star$. 
\begin{definition}[Band-limited wave-packets]\label{def:BL}
  For fixed parameters $\varepsilon , d_0 > 0$, we define:
\begin{align}
\label{eq:BL_def} \BL
 &\equiv \{ u=\alpha(\varepsilon \bx)^{\top} \Phi_\star(\bx)\ :\ {\rm supp}(\hat{\alpha})\subseteq B_{d_0}(0) ~~{\rm and} ~~ \alpha \in L^2 (\R ^n; \C ^N) ~ \} \,  , 
\end{align}
\end{definition}

\begin{hypo}[Translation invariant effective dynamics]\label{hyp:eff}
There is a one-parameter family of unitary operators on $\BL$, $U^{\varepsilon}_{\rm eff}(t)$,  with the following properties:
\begin{enumerate}
\item {\em (Spatially translation invariant effective dynamics)}\\
 For $\psi_0=\alpha_0 ^{\top}(\varepsilon \bx) \rev{\Phi_{\star}}(\bx)\in\BL $, $U^{\varepsilon}_{\rm eff}(t)$ is defined by:
 $$U^{\varepsilon}_{\rm eff}(t)\psi_0 = \frac{e^{-iE_\star t}}{\rev{(2\pi)^{n/2}}}\int\limits_{\R ^n} e^{ i\bxi \cdot \varepsilon \bx}\widehat{\mathscr{U}}_{\rm eff}(\varepsilon^a t;\bxi)\hat{\alpha}_0(\bxi)\, d\bxi \cdot \Phi_\star(\bx)\, . $$
where $(T,\xi)\mapsto \widehat{\mathscr{U}}_{\rm eff}(T;\bxi)$ is a smooth mapping from $\R_T\times \R_\xi^n$ into the space of unitary $N\times N$ matrices.
\item {\em (Approximation by effective dynamics)}\\ Let  $U^{\varepsilon}_{\rm eff}(t)$ be defined as in (1). 
If 
$\psi_0\in\BL $, then 
\begin{equation}\label{eq:abstract_validity}\lim\limits_{\varepsilon \to 0^+}  \sup_{0\le t\le\Tpereps}\|\left( U_{\rm eff}^{\varepsilon}(t) - U^{\varepsilon} (t)\right)\psi_0 \|_{L^2(\R ^n)} = 0 \, ,
\end{equation}
\end{enumerate}
where $\Tpereps$ is given in \eqref{Tper}.
\end{hypo}

\begin{remark}  For $\psi_0=\alpha_0 ^{\top}(\varepsilon \bx) \rev{\Phi_{\star}} (\bx)\in\BL $, 
Hypothesis \ref{hyp:eff}  implies 
 {\it slow envelope effective dynamics}. Indeed, let 
 \[ \rev{\alpha(T,\cdot) = \mathscr{U}_{\rm eff}(T;-i\nabla)[\alpha_0] }\, .\]
Then, using the space-time scaling $ \mathcal{S}_\varepsilon [f](\bx,t) = f(\varepsilon \bx,\varepsilon^a t)$,  we may write:
 \[   U^{\varepsilon}_{\rm eff}(t)\psi_0 = \mathcal{S}_\varepsilon\circ \mathscr{U}_{\rm eff}(t;-i\nabla)[\alpha_0]\cdot
 \mathcal{S}^{-1}_{\varepsilon}\Phi_\star(\bx) = \alpha(\varepsilon^a t,\varepsilon \bx)\cdot
\Phi_\star(\bx). \]
Equivalently, $\alpha(T,X)$ evolves under the effective Hamiltonian $H_{\rm eff}(T,-i\nabla)$, which 
generates the unitary flow $\mathcal{U}_{\rm eff}$:
\[ i\partial_T\alpha = H_{\rm eff}(T,-i\nabla)\alpha,\quad \alpha(0,X)=\alpha_0(X).\]
\end{remark}

The effective evolution operator, $U_{\rm eff}^{\varepsilon}(t)$,  naturally gives rise to an
  \begin{equation} \textrm{{\it Effective monodromy operator defined on $\BL$}:}\quad M_{\rm eff}^{\varepsilon} \equiv U_{\rm eff}^{\varepsilon}(\Tpereps); \label{eff-mono}
  \end{equation}
 for  $\psi_0=\alpha_0 ^{\top}(\varepsilon \bx) \rev{\Phi_{\star}} (\bx)\in\BL $,
 \[  (M_{\rm eff}^{\varepsilon}\psi_0)(\bx) = \rev{\left(\mathscr{U}_{\rm eff}(\Tper;-i\nabla)[\alpha_0]\right)}(\varepsilon\bx)\cdot \Phi_\star(\bx)\ .\]

\begin{hypo}[Spectrum of the effective monodromy operator]\label{hyp:spec}
For every $d_0>0$ sufficiently small there exists $g_0\in [0,\pi)$ such that \begin{equation}\label{eq:abstract_effspec}
\Spec_{\BL}(M^{\varepsilon}_{\rm eff}) \subseteq \left\{ e^{-i\nu} ~| \nu \in \left(E_{\star}\Tpereps -g_0, E_{\star}\Tpereps+ g_0 \right) \right\} \, .
\end{equation}
\end{hypo}
\begin{remark}[Notational assumption; $E_\star=0$ from here on]\label{rem:E0} In the proofs of our results below we shall, without loss of generality, by replacing $H^0$ by $H^0-E_\star$,
 set $E_\star=0$. Under this convention, \eqref{eq:abstract_effspec} in   Hypothesis~\ref{hyp:spec} simply \rev{reads as}
 \begin{equation}\label{eq:abstract_effspec1}
\Spec_{\BL}(M^{\varepsilon}_{\rm eff}) \subseteq \left\{ e^{-i\nu} ~| \nu \in \left( -g_0, g_0 \right) \right\} \, .
\end{equation}
\end{remark}

\subsection{A theorem on near-invariance of quasi-energy spectrum} 

Since the monodromy operator $M^{\varepsilon}$ is unitary (see \eqref{eq:mono_def}), $M^{\varepsilon}$ has  a spectral representation as an  integral with respect to a  projection-valued spectral measure, $\Pi^{\varepsilon}$,  which is supported  on the unit circle; see Sec.\ \ref{sec:spectral_thm}.
We now state our main theorem, which addresses Question 2.

  Denote by $(a,b)$  the arc $\{ e^{-iy} ~~| ~~ y\in (a,b)\} \subseteq S^1$.
\begin{theorem}[Near invariance]\label{thm:Peps_e3}
Consider the periodically forced Schr{\"o}dinger equation \eqref{eq:lsa_bal}. Assume that 
for some quasi-momentum / energy pair $(\bks, E_{\star})=(\bks,0)$ (see Remark \ref{rem:E0}) Hypotheses \ref{hyp:deg}--\ref{hyp:spec} are satisfied.  Let $\Peps$, defined in \eqref{eq:Pdef}, denote the $L^2(\R^n)$ projection onto Bloch modes of $H^0$ of energy and quasi-momentum in an $\varepsilon$-neighborhood of $(\bks, E_{\star})=(\bks,0)$.

Then, for every $g\in (g_0, \pi)$ there exists $\varepsilon_0>0$ such that for all $\varepsilon \in (0,\varepsilon_0)$, 
\begin{equation}\label{near-invar}
\Pi ^{\varepsilon}\left[(-g,g) \right] \circ \Peps\  = \Peps + \mathcal{O}_{\mathcal{B}(L^2)}(\varepsilon ^{n+1}).\
\end{equation}
\end{theorem}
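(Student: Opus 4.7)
The plan is to reduce the assertion on $\mathrm{range}(\Peps)$ to the strict invariance statement on $\BL$ (Theorem~\ref{thm:main_bal}) by means of the quantitative approximation of $\Peps$-data by band-limited wave-packets supplied by Proposition~\ref{prop:BLproj}. Concretely, for each $u\in L^2(\R^n)$ I would produce a decomposition
\[ \Peps u \;=\; w_u + r_u, \qquad w_u \in \BL, \qquad \|r_u\|_{L^2(\R^n)} \;\leq\; C\,\varepsilon^{n+1}\,\|u\|_{L^2(\R^n)}, \]
with $C$ uniform in $u$ and in $\varepsilon \in (0,\varepsilon_0)$. The band-limit $d_0$ in the definition of $\BL$ is fixed $\geq 1$, which is enough: after the rescaling $\bxi = \varepsilon^{-1}(\bk-\bks)$, the quasi-momenta entering $\Peps u$ lie in the unit ball $B_1(0)\subset B_{d_0}(0)$.

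Next I would verify that the arc $(-g,g)$ is admissible for Theorem~\ref{thm:main_bal}. Hypothesis~\ref{hyp:spec} gives $\Spec_{\BL}(\Meff) \subseteq (-g_0,g_0) \subset (-g,g)$, and on $\mathrm{range}(\Peps)$ the energies lie within $L\varepsilon$ of $E_\star = 0$, so that $\Spec(M^{\varepsilon}_0\circ\Peps)$ clusters near $1\in S^1$; both containments hold for $\varepsilon$ sufficiently small. Theorem~\ref{thm:main_bal} then applies to $w_u$ to give $\Pi^{\varepsilon}[(-g,g)]\,w_u = w_u$. Combining,
\[ \Pi^{\varepsilon}\bigl[(-g,g)\bigr]\Peps u \;=\; w_u \;+\; \Pi^{\varepsilon}\bigl[(-g,g)\bigr]r_u \;=\; \Peps u \;+\; \bigl(\Pi^{\varepsilon}\bigl[(-g,g)\bigr] - \mathrm{Id}\bigr) r_u. \]
Since the spectral projection $\Pi^{\varepsilon}[(-g,g)]$ is a contraction on $L^2(\R^n)$ and $\|r_u\|\leq C\varepsilon^{n+1}\|u\|$, the remainder is $\mathcal{O}_{\mathcal{B}(L^2)}(\varepsilon^{n+1})\|u\|$, which is exactly~\eqref{near-invar}.

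The principal analytic obstacle is therefore delegated entirely to the two inputs: the strict invariance Theorem~\ref{thm:main_bal} (which itself rests on the effective-dynamics Hypotheses~\ref{hyp:eff}--\ref{hyp:spec}) and the sharp $\varepsilon^{n+1}$ approximation rate of Proposition~\ref{prop:BLproj}; the passage above from these two to Theorem~\ref{thm:Peps_e3} is a short contractivity argument. The heart of the matter is the approximation: writing $\Peps u$ in its Floquet--Bloch representation as an integral of amplitudes against $\Phi_b(\bx;\bk)$ over $|\bk-\bks|<\varepsilon$ with $b\in\{\bnat,\ldots,\bnat+N-1\}$, and substituting the components of $\Phi_\star(\bx)$ for $\Phi_b(\bx;\bk)$, produces a band-limited wave-packet, with error controlled by the Taylor expansion of the smooth (by Hypothesis~\ref{hyp:deg}) Bloch eigenprojector in $\bk-\bks$ about $\bks$. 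Matching this expansion to the $\varepsilon$-scale of the slow envelope $\alpha(\varepsilon\bx)$ and absorbing the linear term into the envelope itself is the step that ultimately upgrades the naive bound to $\varepsilon^{n+1}$, and is the one genuinely analytic point in the whole scheme.
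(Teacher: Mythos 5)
Your proposal is correct and follows essentially the same route as the paper: decompose $\Peps u = w_u + r_u$ with $w_u \in \BL$ and $\|r_u\| = \mathcal{O}(\varepsilon^{n+1}\|u\|)$ via Proposition~\ref{prop:BLproj}, apply the strict invariance of Theorem~\ref{thm:main_bal} to $w_u$, and absorb $r_u$ using the contractivity of the spectral projection. The paper phrases this through the complementary arc $S^1\setminus(-g,g)$ rather than $(-g,g)$ directly, but that is an immaterial difference.
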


\vspace{0.1in}

Theorem \ref{thm:Peps_e3} is a near-invariance (or stability) result for a spectral subspace associated 
 with $H^0$, the range of $\Peps$, under the perturbed dynamics $H^\varepsilon(t)$. Indeed,  let the parametric forcing term be zero, i.e., $W = 0$; Then, under Hypothesis \ref{hyp:deg}, the non-driven monodromy operator $M^{\varepsilon}_0$, restricted to the range of  $\Peps$, is given by, for any $u\in L^2(\R ^n)$:
\begin{equation}\label{eq:M0Peps}
M^{\varepsilon}_0\Peps u (\bx) = \sum_{b=b_\star}^{b_\star+N-1}\int\limits_{|\bk-\bks|<\varepsilon} \left\langle \Phi_{b}(\cdot; \bk) , u \right\rangle \Phi_{b} (\bx;\bk)  e^{-iE_{b} (\bk)\Tpereps} \, d\bk \, .\end{equation}
Suppose that the the relevant dispersion surfaces near $\bks$ are bounded by a polynomial of degree $a$, i.e., there is a constant $C> 0$ such that for $|\bk - \bks |\lesssim \varepsilon$ and for all $b_\star \leq b \leq  b_\star+N-1$ we have
  \begin{equation}
      |E_b(\bk)| \le C|\bk-\bks|^a + \mathcal{O}(\varepsilon^{a+1}) = \mathcal{O}(\varepsilon^a) \, .
  \end{equation}
Denoting the spectral measure of $M^{\varepsilon}_0$ by $\Pi^{\varepsilon}_0$, we have that for a fixed $g$ and sufficiently small $\varepsilon >0$, by inspecting the the exponents in \eqref{eq:M0Peps}
\begin{equation}\label{eq:Pi0eps_invar} 
 \Pi_0 ^{\varepsilon}\left[ (-g,g) \right]\circ \Peps = \Peps  \quad{\rm and}\quad \Pi_0 ^{\varepsilon}\left[ S^1\setminus (-g,g) \right]\circ \Peps = 0 \, .
 \end{equation}
 As discussed in Remark \ref{rem:O1effect}, it is non-trivial that a form of \eqref{eq:Pi0eps_invar} persists
  for time-periodic forcing $W\ne0$ in \eqref{eq:lsa_bal},  due to the formally order-one cumulative effect of a perturbation of size $\varepsilon^{a}$ on the time-scale $\Tpereps \sim \varepsilon^{-a}$.

 \subsection{The main result for the space of band limited wavepackets $\BL$} 

As a step toward the proof of Theorem \ref{thm:Peps_e3}, we first prove its analog, Theorem \ref{thm:main_bal}, a strict invariance property for functions in  $\BL$ (see \eqref{eq:BL_def}), a closed subspace of $L^2(\R^n)$. Since $\BL$ approximates the range of $\Peps$ (Proposition \ref{prop:BLproj}), we can then use Theorem \ref{thm:main_bal} to prove 
Theorem \ref{thm:Peps_e3}, which concerns the range of $\Peps$.


\begin{lemma}\label{lem:BL_orth}
There exists $\varepsilon_0>0$, such that for all \rev{$ \varepsilon \in (0,\varepsilon_0)$},  
$\BL$ defined in \eqref{eq:BL_def} is a closed subspace of  $L^2(\R ^n)$. Hence, $L^2(\R ^n)$ has the  decomposition
$$L^2( \R ^n) = \BL \oplus \BL ^{\perp} \, ,$$
with corresponding orthogonal projections on $L^2(\R^n)$ denoted 
\[ \textrm{${\rm Proj}_{\BL}$\quad{\rm and}\quad ${\rm Proj}_{\BL}^{\perp}= {\rm I}-
{\rm Proj}_{\BL}$.}\]
\end{lemma}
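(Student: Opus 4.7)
\medskip
\noindent\textbf{Proof plan for Lemma \ref{lem:BL_orth}.}

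The strategy is to exhibit $\BL$ as the image, under a bounded-below linear map, of a standard closed subspace of $L^2(\R^n;\C^N)$, namely the Paley-Wiener space
$\mathcal{A}_{d_0} \equiv \{\alpha\in L^2(\R^n;\C^N) : \operatorname{supp}(\hat\alpha)\subseteq B_{d_0}(0)\}$.
Define the envelope-to-wavepacket map
$$\mathcal{T}_\varepsilon : \mathcal{A}_{d_0}\to L^2(\R^n),\qquad \mathcal{T}_\varepsilon[\alpha](\bx) \equiv \alpha(\varepsilon\bx)^{\top}\Phi_\star(\bx),$$
so that $\BL = \mathcal{T}_\varepsilon[\mathcal{A}_{d_0}]$ by Definition~\ref{def:BL}. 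Since $\mathcal{A}_{d_0}$ is closed in $L^2(\R^n;\C^N)$ (the Fourier support condition is preserved under $L^2$ limits), once I show $\mathcal{T}_\varepsilon$ is bounded below, its image $\BL$ is automatically closed, and then $L^2(\R^n) = \BL\oplus\BL^\perp$ together with the orthogonal projections is standard Hilbert space theory.

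The key calculation is an exact norm identity of the form $\|\mathcal{T}_\varepsilon[\alpha]\|_{L^2(\R^n)}^2 = C\,\varepsilon^{-n}\|\alpha\|_{L^2}^2$, derived via the Floquet-Bloch decomposition of $\R^n$. Choose $\varepsilon_0>0$ small enough that $B_{\varepsilon d_0}(\bks)\subset\mathcal{B}^\circ$ for every $\varepsilon\in(0,\varepsilon_0)$; this is where the smallness hypothesis on $\varepsilon$ enters. For $\alpha\in\mathcal{A}_{d_0}$, Fourier inversion gives
$$\mathcal{T}_\varepsilon[\alpha](\bx) \ =\ \int_{B_{d_0}(0)} \hat\alpha(\bxi)^{\top}\, e^{i\varepsilon\bxi\cdot\bx}\,\Phi_\star(\bx)\,d\bxi.$$
Because $\Phi_\star$ is $\bks$-pseudoperiodic, the integrand at frequency $\bxi$ is $(\bks+\varepsilon\bxi)$-pseudoperiodic, i.e.\ lies in $L^2_{\bks+\varepsilon\bxi}$. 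After the change of variable $\bk = \bks+\varepsilon\bxi$, the displayed formula is \emph{precisely} the Floquet-Bloch fiber decomposition of $u = \mathcal{T}_\varepsilon[\alpha]$: uniqueness of the Bloch transform gives
$$u_\bk(\bx) \ =\ \begin{cases}\varepsilon^{-n}\,\hat\alpha\!\left(\frac{\bk-\bks}{\varepsilon}\right)^{\!\top}\! e^{i(\bk-\bks)\cdot\bx}\,\Phi_\star(\bx), & |\bk-\bks|<\varepsilon d_0,\\[2pt] 0,& \text{otherwise.}\end{cases}$$
Since $\{\Phi_b(\cdot,\bks)\}_{b=\bnat}^{\bnat+N-1}$ is orthonormal in $L^2_{\bks}$ (equivalently on the fundamental cell $\Omega$ of unit volume, after stripping the factor $e^{i\bks\cdot\bx}$), one obtains $\|u_\bk\|_{L^2_\bk}^2 = \varepsilon^{-2n}|\hat\alpha((\bk-\bks)/\varepsilon)|^2$. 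Applying Plancherel for the Floquet-Bloch transform and changing variables $\bxi = (\bk-\bks)/\varepsilon$ yields
$$\|u\|_{L^2(\R^n)}^2 \ =\ \int_\mathcal{B} \|u_\bk\|_{L^2_\bk}^2\,d\bk \ =\ \varepsilon^{-n}\,\|\hat\alpha\|_{L^2}^2 \ =\ \varepsilon^{-n}\,\|\alpha\|_{L^2}^2,$$
up to the Fourier normalization convention of Section~\ref{sec:prelim}.

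Therefore $\mathcal{T}_\varepsilon$ is an isomorphism from $\mathcal{A}_{d_0}$ onto $\BL$ with explicit constant $\varepsilon^{-n/2}$; in particular it is bounded below. Closedness of $\BL$ follows: any Cauchy sequence $\{u_m\}\subset\BL$ corresponds to a Cauchy sequence $\{\alpha_m\}\subset\mathcal{A}_{d_0}$ (by the isometric relation), which converges in the closed space $\mathcal{A}_{d_0}$ to some $\alpha$, and $u = \mathcal{T}_\varepsilon[\alpha]\in\BL$ by continuity. The orthogonal decomposition $L^2(\R^n)=\BL\oplus\BL^\perp$ and the projections $\mathrm{Proj}_{\BL}$, $\mathrm{Proj}_{\BL}^\perp$ are then the standard consequences of $\BL$ being a closed subspace of the Hilbert space $L^2(\R^n)$. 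The main technical care is in the Floquet-Bloch bookkeeping: one must verify that $B_{\varepsilon d_0}(\bks)$ does not wrap around the Brillouin zone (forcing the smallness of $\varepsilon$), justify that the integral representation above coincides with the Bloch fiber decomposition (via uniqueness of the Bloch transform), and track Fourier-normalization constants consistently. No other step is deep — the content of the lemma is really that this isometry-up-to-scaling exists.
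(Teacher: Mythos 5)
Your proposal is correct. Note that the paper itself does not reprove this lemma (it is quoted from \cite{sagiv2022effective}); the ingredient it actually deploys for the same purpose elsewhere is the real-space averaging lemma (Lemma \ref{lem:avg}, cited as [Lemma 4.5] of that reference): for a band-limited envelope $q$ and a $\Lambda$-periodic factor $p$, $\int p(\bx)q(\varepsilon\bx)\,d\bx=\varepsilon^{-n}\bigl(\int_\Omega p\bigr)\bigl(\int q\bigr)$ for $\varepsilon$ small, which applied to $|\alpha(\varepsilon\bx)^{\top}\Phi_\star(\bx)|^2$ with the orthonormality of the $\Phi_b$ gives exactly your norm identity $\|\alpha(\varepsilon\cdot)^{\top}\Phi_\star\|_{L^2}^2=\varepsilon^{-n}\|\alpha\|_{L^2}^2$ (this is the computation in the proof of Lemma \ref{lem:lbd_einu0Meff}). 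You reach the same isometry-up-to-scaling by a different route: you read off the Floquet--Bloch fiber decomposition of $u=\alpha(\varepsilon\cdot)^{\top}\Phi_\star$ (fibers supported in $|\bk-\bks|<\varepsilon d_0$, each a multiple of $e^{i(\bk-\bks)\cdot\bx}\Phi_\star$) and apply Bloch--Plancherel. Both arguments need $\varepsilon$ small for the same reason — in your version so that $B_{\varepsilon d_0}(\bks)$ sits inside the Brillouin zone, in the averaging-lemma version so that the shifted Fourier supports miss the nonzero reciprocal-lattice points — which is where $\varepsilon_0$ enters the statement. The averaging-lemma route is more elementary (no appeal to uniqueness of the Bloch transform or its Plancherel normalization), while your route has the advantage of exhibiting explicitly the quasimomentum localization of $\BL$, which dovetails with Proposition \ref{prop:BLproj}. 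Once the lower bound for the map $\alpha\mapsto\alpha(\varepsilon\cdot)^{\top}\Phi_\star$ on the (closed) Paley--Wiener space is in hand, your deduction of closedness and of the orthogonal decomposition is standard and complete.
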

\rev{For proof, see Appendix \ref{ap:bl_orth_pf}.} $\BL$ is a very natural space with which to study the effects of time-dependent forcing. In fact, the proof of Theorem \ref{thm:Peps_e3}, follows from its analog for the space $\BL$:

\begin{theorem}[Invariance on $\BL$]\label{thm:main_bal}
Consider \eqref{eq:lsa_bal} and suppose it satisfies Hypotheses \ref{hyp:deg}--\ref{hyp:spec} at some quasi-momentum energy pair $(\bks, E_{\star}=0)$. Fix $d_0 \in (0,\pi)$ and $g>0$ such that $g \in (g_0, \pi)$. Then,  for every $g\in (g_0, \pi)$ there exists $\varepsilon_0>0$ such that for all $\varepsilon \in (0,\varepsilon_0)$ 
\begin{equation}\label{eq:invar}
\Pi^{\varepsilon} \left[(-g,g)\right] \circ {\rm Proj}_{\BL} = {\rm Proj}_{\BL}\ ,
\end{equation}

Equivalently,
\begin{equation}\label{eq:qenergy_bal_control}
\Pi^{\varepsilon} \left[ S^1 \setminus (-g,g)\right] \circ {\rm Proj}_{\BL} = 0 \, .
\end{equation}
\end{theorem}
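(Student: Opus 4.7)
\emph{Plan.} My strategy combines the spectral containment of the effective monodromy $\Meff$ on $\BL$ (Hypothesis \ref{hyp:spec}) with the $\BL$-level approximation $U^{\varepsilon}(t) \approx U_{\rm eff}^\varepsilon(t)$ (Hypothesis \ref{hyp:eff}(2)), through a Fourier-series functional calculus argument for the unitary monodromy operator $M^\varepsilon$.

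To set up, fix $g' \in (g_0, g)$ and choose a smooth separator $\chi \in C^\infty(S^1; [0,1])$ with $\chi \equiv 0$ on $\{e^{-i\nu} : |\nu| \leq g'\}$ (which contains $\Spec_{\BL}(\Meff)$ by Hypothesis \ref{hyp:spec}) and $\chi \equiv 1$ on $\{e^{-i\nu} : |\nu| \geq g\}$. Since $\mathbf{1}_{S^1 \setminus (-g,g)} = \mathbf{1}_{S^1 \setminus (-g,g)} \cdot \chi$, the functional calculus gives
\begin{equation*}
\Pi^\varepsilon\bigl[S^1 \setminus (-g, g)\bigr] \circ {\rm Proj}_{\BL} = \Pi^\varepsilon\bigl[S^1 \setminus (-g, g)\bigr] \cdot \chi(M^\varepsilon) \circ {\rm Proj}_{\BL},
\end{equation*}
so \eqref{eq:qenergy_bal_control} reduces to establishing $\chi(M^\varepsilon)|_{\BL} \equiv 0$. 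On the effective side this is immediate: $\Meff$ preserves $\BL$ by Hypothesis \ref{hyp:eff}(1), with $\Spec_{\BL}(\Meff) \subseteq \{\chi = 0\}$, hence $\chi(\Meff)|_{\BL} \equiv 0$.

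The main work is transferring this vanishing from $\Meff$ to $M^\varepsilon$ on $\BL$. I would expand $\chi(z) = \sum_{k \in \mathbb{Z}} \hat\chi_k z^k$ with rapidly decaying Fourier coefficients (by smoothness of $\chi$), so for $u \in \BL$,
\begin{equation*}
\chi(M^\varepsilon) u \;=\; \chi(M^\varepsilon) u - \chi(\Meff) u \;=\; \sum_{k \in \mathbb{Z}} \hat\chi_k \bigl[(M^\varepsilon)^k - (\Meff)^k\bigr] u,
\end{equation*}
and bound each summand by the telescoping identity
\begin{equation*}
(M^\varepsilon)^k - (\Meff)^k \;=\; \sum_{j=0}^{k-1} (M^\varepsilon)^{k-1-j}\,\bigl(M^\varepsilon - \Meff\bigr)\,(\Meff)^j.
\end{equation*}
Since $\Meff$ maps $\BL$ into itself, each $(\Meff)^j u$ sits in $\BL$, where Hypothesis \ref{hyp:eff}(2) controls $\|(M^\varepsilon - \Meff)(\Meff)^j u\|_{L^2(\R^n)}$; unitarity of $M^\varepsilon$ and rapid decay of $|\hat\chi_k|$ then yield $\|\chi(M^\varepsilon) u\|_{L^2(\R^n)} \to 0$ as $\varepsilon \to 0^+$.

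The main obstacle, and the step I view as most delicate, is upgrading this asymptotic vanishing to the strict equality claimed for sufficiently small $\varepsilon$. I would anticipate sharpening Hypothesis \ref{hyp:eff}(2) to a quantitative polynomial-in-$\varepsilon$ rate (available in the concrete examples of Section \ref{sec:examples}), and combining this with the rigidity built into the band-limit constraint $\operatorname{supp}(\hat\alpha) \subset B_{d_0}(0)$ defining $\BL$: decomposing $u$ in the Floquet-Bloch picture should resolve $\chi(M^\varepsilon) u$ explicitly as an integral over a narrow quasi-momentum region, so that the spectral gap $g - g' > 0$ together with the quantitative smallness forces exact vanishing. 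The equivalent form \eqref{eq:invar} then follows from the identity $\Pi^\varepsilon[(-g, g)] + \Pi^\varepsilon[S^1 \setminus (-g, g)] = {\rm Id}$.
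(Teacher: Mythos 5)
Your reduction to showing $\chi(M^\varepsilon)|_{\BL}=0$ and the Fourier-series/telescoping transfer from $\Meff$ to $M^\varepsilon$ are sound as far as they go, but they only yield $\|\Pi^{\varepsilon}[S^1\setminus(-g,g)]\circ{\rm Proj}_{\BL}\|_{B(L^2)}\to 0$ as $\varepsilon\to 0^+$: every bound you produce is proportional to $\|u\|$, and smallness of an operator at fixed $\varepsilon$ is strictly weaker than the exact identity \eqref{eq:qenergy_bal_control} that the theorem asserts (and that is used downstream to get the $\mathcal{O}(\varepsilon^{n+1})$ rate in Theorem \ref{thm:Peps_e3}, which an $o(1)$ remainder would destroy). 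The step you flag as "delicate" is in fact the whole content of the theorem, and the patch you sketch does not close it: sharpening Hypothesis \ref{hyp:eff}(2) to a polynomial rate still gives only $\|\chi(M^\varepsilon)u\|\leq C\varepsilon^{m}\|u\|$, and the "rigidity from band-limitation" idea presupposes fiber-wise control of the quasi-energy spectrum of the true monodromy $M^\varepsilon$, which is precisely what is being proven, not an input.

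The paper's mechanism, absent from your proposal, is a self-referential contradiction argument applied to the projected vector itself. Set $v'\equiv\Pi^{\varepsilon}[S^1\setminus(-g,g)]v$ for $v\in\BL$. A centering lemma for unitary operators (Lemma \ref{lem:center}) gives $M^\varepsilon v'=-v'+\eta$ with $\|\eta\|\leq 2\sin\left(\tfrac{\pi-g}{2}\right)\|v'\|$, because $v'$ is spectrally supported in the far arc whose midpoint is $e^{-i\pi}$. Separately, an explicit lower bound on $\BL$ (Lemma \ref{lem:lbd_einu0Meff}, proved by Fourier-diagonalizing $\widehat{M}^{\varepsilon}_{\rm eff}(\bxi)$ and using the averaging Lemma \ref{lem:avg} together with Hypothesis \ref{hyp:spec}) gives $\|(-{\rm Id}-\Meff)f\|\geq 2\sin\left(\tfrac{\pi-g_0}{2}\right)\|f\|$. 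Combining these with the Hypothesis \ref{hyp:eff}(2) upper bound on $\|(M^\varepsilon-\Meff)v'\|$ yields
\begin{equation*}
2\left[\sin\left(\tfrac{\pi-g_0}{2}\right)-\sin\left(\tfrac{\pi-g}{2}\right)\right]\|v'\|_{L^2}\ \leq\ o(1)\,\|v'\|_{L^2},
\end{equation*}
and since $g>g_0$ the bracket is a positive constant, forcing $v'=0$ for all sufficiently small $\varepsilon$. The crucial structural feature is that both sides scale with $\|v'\|$ rather than with $\|v\|$, which is exactly how approximate information about $M^\varepsilon-\Meff$ is converted into exact vanishing; your argument has no analogue of this step, so as written it proves a weaker statement than Theorem \ref{thm:main_bal}.
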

Theorem \ref{thm:main_bal} is proved in Section \ref{sec:pf_main}.  Here, we first use it to give a proof of the main result, Theorem \ref{thm:Peps_e3} (concerning $\Peps$).

\begin{proof}[Proof of the main result, Theorem \ref{thm:Peps_e3}]
 To prove Theorem \ref{thm:Peps_e3} we shall use  Theorem \ref{thm:main_bal} above and the following Proposition, which is proved in Section \ref{sec:proj_pf}:
\begin{proposition}[$\BL$ approximates ${\rm ran}(\mathcal{P}^\varepsilon_0)$]\label{prop:BLproj}
There exists $\varepsilon_0 >0$ such that for every $0<\varepsilon<\varepsilon_0$ the following holds: for every $f\in L^2( \R ^n)$ there is a $u_{\varepsilon}[f]\in \BL$ with $d_0=1$ (see \eqref{eq:BL_def}) such that 
\begin{equation}\label{eq:Peps2BL}
 \Peps f = u_{\varepsilon}[f] + \mathcal{O}\left(\varepsilon^{n+1} \|f\|_{L^2(\R ^n)} \right) \, \, .
\end{equation}
Conversely, there exists $C>0$ such that for every
$u\in \BL$ with $d_0$ sufficiently small, then
\begin{equation}\label{eq:BL2Peps}
\left\|\left( I- \Peps \right) u \right\|_{L^2(\R ^n)} \leq C \varepsilon^{n+1} \left\|u\right\|_{L^2(\R^n )} \, .
\end{equation}
\end{proposition}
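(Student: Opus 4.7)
The plan is to prove both estimates via Floquet--Bloch decomposition and the smoothness of the total spectral projector onto the coalescing eigenspace. Write $\tilde f(\bx,\bk) := e^{-i\bk\cdot\bx}(\mathcal{G}f)(\bx,\bk)$ for the $\Lambda$-periodic factor of the Floquet--Bloch transform, and $\tilde H^0_\bk = (-i\nabla+\bk)^2+V$, which is an analytic (in fact polynomial) family in $\bk$. By Hypothesis~\ref{hyp:deg} and Kato perturbation theory, the total projector $\tilde\Pi_\star(\bk)$ onto the eigenspace spanned by the bands $b_\star,\dots,b_\star+N-1$ is real-analytic in $\bk$ near $\bks$, even though the individual eigenvectors $p_b(\cdot,\bk)$ need not admit a smooth continuation across the degeneracy. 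Choosing $L$ larger than the Lipschitz constant of the dispersion surfaces at $\bks$, for $\varepsilon$ small the spectral projection in \eqref{eq:Pdef} coincides with $\tilde\Pi_\star(\bk)$ throughout $|\bk-\bks|<\varepsilon$, so that
\[ \Peps f(\bx) = \int_{|\bk-\bks|<\varepsilon} e^{i\bk\cdot\bx}\,[\tilde\Pi_\star(\bk)\tilde f(\cdot,\bk)](\bx)\,d\bk. \]

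For \eqref{eq:Peps2BL}, I would define
\[ u_\varepsilon[f](\bx) := \int_{|\bk-\bks|<\varepsilon} e^{i\bk\cdot\bx}\,[\tilde\Pi_\star(\bks)\tilde f(\cdot,\bk)](\bx)\,d\bk, \]
namely the same expression with the $\bk$-dependent projector replaced by its value at $\bks$. Expanding $\tilde\Pi_\star(\bks)\tilde f(\cdot,\bk) = \sum_{b=b_\star}^{b_\star+N-1} \beta_b(\bk)\,p_b(\cdot,\bks)$ with $\beta_b(\bk) = \langle p_b(\cdot,\bks),\tilde f(\cdot,\bk)\rangle_{L^2(\R^n/\Lambda)}$ and substituting $\bxi=(\bk-\bks)/\varepsilon$, one reads off that $u_\varepsilon[f]$ has the form $\alpha(\varepsilon\bx)^\top\Phi_\star(\bx)$ with $\hat\alpha$ supported in the unit ball, hence $u_\varepsilon[f]\in\BL$ with $d_0=1$. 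The error estimate then follows from Parseval's identity for the Floquet--Bloch transform,
\[ \|\Peps f - u_\varepsilon[f]\|_{L^2(\R^n)}^2 = \int_{|\bk-\bks|<\varepsilon}\bigl\|[\tilde\Pi_\star(\bk) - \tilde\Pi_\star(\bks)]\tilde f(\cdot,\bk)\bigr\|_{L^2(\R^n/\Lambda)}^2\,d\bk, \]
combined with a Taylor expansion of $\tilde\Pi_\star(\bk)$ at $\bks$, in which the first-order correction (which by Kato's formula lies in $\mathrm{range}(I-\tilde\Pi_\star(\bks))$) is either absorbed into a refinement of $u_\varepsilon[f]$ or exploited via the symmetry of integration over the $\varepsilon$-ball, leaving a higher-order residual.

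For \eqref{eq:BL2Peps}, for $u=\alpha(\varepsilon\bx)^\top\Phi_\star(\bx)\in\BL$ I would compute $(\mathcal{G}u)(\bx,\bk)$ explicitly by Poisson summation over the lattice $\Lambda$. When $d_0$ is sufficiently small (and $\bks$ lies in the interior of $\mathcal{B}$), only the trivial reciprocal-lattice term contributes, giving $\tilde u(\bx,\bk) = C\varepsilon^{-n}\,p_\star(\bx)^\top\hat\alpha((\bk-\bks)/\varepsilon)$, supported in $|\bk-\bks|<\varepsilon d_0$. Since $p_\star(\bx)\in\mathrm{range}(\tilde\Pi_\star(\bks))$ exactly, $(I-\tilde\Pi_\star(\bks))p_\star=0$, so the integrand defining $(I-\Peps)u$ vanishes at $\bk=\bks$; writing $(I-\tilde\Pi_\star(\bk))p_\star = -[\tilde\Pi_\star(\bk)-\tilde\Pi_\star(\bks)]p_\star$ then reduces the bound for $\|(I-\Peps)u\|_{L^2}$ to the same Taylor/Parseval analysis as in Part~1.

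The main obstacle I anticipate is extracting the sharp $\varepsilon^{n+1}$ rate. A direct first-order Taylor of $\tilde\Pi_\star(\bk)-\tilde\Pi_\star(\bks)$ yields only an $O(\varepsilon)$ bound in $L^2$; obtaining the additional $\varepsilon^{n}$ factor requires either constructing $u_\varepsilon[f]$ with a higher-order correction via Kato's explicit formula for $\partial_\bk \tilde\Pi_\star(\bks)$ (so that the first-order contribution cancels exactly in the $\BL$-allowed form), or exploiting oscillatory-integral cancellations from the phase $e^{i\bk\cdot\bx}$ integrated against a smooth function on the small ball $|\bk-\bks|<\varepsilon$. The real-analytic (not merely $C^k$) dependence of $\tilde\Pi_\star$ on $\bk$, which comes from the polynomial structure of $\tilde H^0_\bk$ together with Hypothesis~\ref{hyp:deg}, is what makes this step go through.
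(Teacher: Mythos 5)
Your set-up is essentially the paper's own: the paper also writes $\Peps$ fiberwise via \eqref{eq:Pdef}, represents the fiber projector as a Riesz contour integral of $(\zeta-H(\bk))^{-1}$, Taylor-expands it at $\bks$, and identifies the frozen-projector term as the $\BL$ element $u_\varepsilon[f]$ with $d_0=1$; your converse argument likewise parallels the paper's Fourier-series computation showing that only the trivial reciprocal-lattice mode survives. The genuine issue is the one you flagged and did not resolve: your argument proves only an $\mathcal{O}(\varepsilon)$ remainder, not $\mathcal{O}(\varepsilon^{n+1})$. The paper obtains the extra factor $\varepsilon^{n}$ by estimating the Taylor-remainder term in \eqref{texp} as an integral of $|\kappa|\le\varepsilon$ over the ball $|\kappa|<\varepsilon$, i.e.\ by pulling a uniform-in-$\bk$ bound on the fiber data out of the $\kappa$-integral. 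Your Bloch--Plancherel identity shows exactly why that is not available for general $f\in L^2(\R^n)$: the fiber norms $\|\tilde f(\cdot,\bk)\|_{L^2(\R^n/\Lambda)}$ are only square-integrable in $\bk$, so the correct accounting is $\bigl(\int_{|\bk-\bks|<\varepsilon}\|[\tilde\Pi_\star(\bk)-\tilde\Pi_\star(\bks)]\tilde f(\cdot,\bk)\|^2 d\bk\bigr)^{1/2}\lesssim \varepsilon\|f\|_{L^2}$, with no volume gain. Moreover $\mathcal{O}(\varepsilon)$ is sharp whenever $(I-\tilde\Pi_\star(\bks))\nabla_\bk\tilde\Pi_\star(\bks)\neq 0$: take $f$ whose Bloch fibers are concentrated in a tiny $\bk$-window at distance $\sim\varepsilon$ from $\bks$ and proportional to the true Bloch eigenfunctions $p_{b}(\cdot;\bk)$; then $\Peps f=f$, while every element of $\BL$ has fibers confined to the frozen space ${\rm span}\{p_b(\cdot;\bks)\}$, so $\mathrm{dist}(\Peps f,\BL)\gtrsim \varepsilon\|f\|_{L^2}$. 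The same mechanism limits \eqref{eq:BL2Peps} to $\mathcal{O}(\varepsilon)$ for $\BL$ data with $\hat\alpha$ concentrated near $|\bxi|=d_0$.

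For the same reason, neither of your two proposed rescues can close the gap. The first-order Kato correction is fiberwise off-diagonal, $\tilde\Pi_\star(\bks)\,\partial_\bk\tilde\Pi_\star(\bks)\,\tilde\Pi_\star(\bks)=0$, so (as you note) it maps the frozen eigenspace into its orthogonal complement; precisely for that reason it cannot be absorbed into a refinement of $u_\varepsilon[f]$ inside $\BL$, whose fibers are constrained to ${\rm range}(\tilde\Pi_\star(\bks))$ by \eqref{eq:BL_def}. And there is no additional oscillatory-integral cancellation to exploit: the Bloch transform is an isometry, so the phases $e^{i\bk\cdot\bx}$ are already fully accounted for in the Parseval identity you wrote, and the first-order term does not vanish by symmetry because $\tilde f(\cdot,\bk)$ has no symmetry in $\bk-\bks$. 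So the obstacle you identified is not a defect of your execution but of the target rate: what your argument (and, done carefully, the paper's) yields is \eqref{eq:Peps2BL}--\eqref{eq:BL2Peps} with $\varepsilon^{n+1}$ replaced by $\varepsilon$, which still suffices for a version of Theorem \ref{thm:Peps_e3} with an $\mathcal{O}(\varepsilon)$ error; the stated $\varepsilon^{n+1}$ rate would require extra structure, e.g.\ fiber data uniformly bounded in $\bk$ (so the $L^1_\bk$ volume count is legitimate) or the nongeneric condition $(I-\tilde\Pi_\star(\bks))\nabla_\bk\tilde\Pi_\star(\bks)=0$. You should either state and prove the proposition with the $\mathcal{O}(\varepsilon)$ rate, or make such an additional hypothesis explicit.
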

 By Proposition \ref{prop:BLproj}, $\Peps u  = u_{\rm bl} + r$, where $u_{\rm bl}\in \BL$ and $\|r\|_{L^2} = \mathcal{O}(\varepsilon ^{n+1} \|v\|_{L^2})$. Now 
\begin{align*}
\Pi^{\varepsilon} \left[ S^1 \setminus (-g,g)\right] \circ \Peps u 
&= \Pi^{\varepsilon} \left[ S^1 \setminus (-g,g)\right]u_{\rm bl} + \Pi^{\varepsilon} \left[ S^1 \setminus (-g,g)\right] r \\
&= 0 + \Pi^{\varepsilon} \left[ S^1 \setminus (-g,g)\right] r \, ,
\end{align*}
where, since $u_{\rm bl}\in \BL$, the last equality is the result of Theorem \ref{thm:main_bal}. Finally, since $\Pi^{\varepsilon} \left[ S^1 \setminus (-g,g)\right]$ is a projection, $$ \left\| \Pi^{\varepsilon} \left[ S^1 \setminus (-g,g)\right] r \right\|_{L^2(\R ^n)} \leq \left\| r \right\|_{L^2(\R ^n)} = \mathcal{O}(\varepsilon ^{n+1})\, , $$ which completes the proof.
\end{proof}

\section{Applications of the main result, Theorem \ref{thm:Peps_e3}}\label{sec:examples}

In this section we apply Theorem \ref{thm:Peps_e3} to time-periodically forced (Floquet)
Hamiltonians of the form: 
\begin{equation}\label{eq:lsa_concrete}
H^{\varepsilon}(t) = H^0 + 2i\varepsilon ^a \uA(\varepsilon^a t) \cdot \nabla \, .
\end{equation}
Here, $\uA:\R \to \R ^n$ is $\Tper$-periodic with zero mean, i.e., $\int_0^{\Tper} \uA (T) \, dT  = 0$.
A discussion, with references,  of how this class of models arises in condensed matter physics and photonics is presented in Appendix \ref{sec:phys}.

The setting of Theorem \ref{thm:Peps_e3} is a Floquet Hamiltonian, here \eqref{eq:lsa_concrete}, and a neighborhood of an energy quasi-momentum pair $(E_\star,\bk_\star)$ 
in the band structure of $H^0$, in which the class of wave-packet initial data are spectrally localized.
 Here, we characterize the local character of the band structure at $(E_\star,\bk_\star)$ by a number of parameters.
 As in Hypothesis \ref{hyp:deg}, we denote by  $N$ the multiplicity of $E_\star$. The parameter $a$ in \eqref{eq:lsa_concrete} is chosen to match the rate at which
  to energy, $E$, deviates from $E_\star$ for $|\bk-\bk_\star|$ small. 
  Table \ref{tab:examples} summarizes
  four cases of physical interest, which are discussed in the following subsections.

     \begin{table}
\centering
\scriptsize
\begin{tabular}{|c|| c | c | c| |c| c|}
 \hline \hline 
 Section  & dispersion rate $a$ &  $N$, degeneracy order & dimension $n$ &  effective equation
 \\ \hline
\ref{sec:lin}  & $1$& $1$& $n\geq 1$ & Transport \eqref{eq:tr-effH} \\ \hline
\ref{sec:dirac}  & $1$& $\substack{2 \\ \text{(+conical touching)}}$& $n=2$ & Dirac system  \eqref{eq:Dirac} \\ \hline
\ref{sec:quad}   & $2$& $1$& $n\geq 1 $ & Schr{\"o}dinger \eqref{eq:eff_sch} \\ \hline
\ref{sec:quad_deg}  & $2$& $\substack{2 \\ \text{(+quadratic touching)}}$ & $n=2$ & Schr{\"o}dinger system  \eqref{eq:coupledSchr} \\ \hline
\end{tabular}
\caption{Summary of examples discussed in Subsections \ref{sec:lin}-\ref{sec:quad_deg}. Parameters $N$ and $a$ are defined in \eqref{eq:lsa_bal} and Hypothesis \ref{hyp:deg}, respectively.}
\label{tab:examples}
\end{table}

  In what follows, $\BL$ wavepackets are always denoted by $u(\bx) = \alpha (\varepsilon \bx) ^{\top} \Phi (\bx)$, where the dimension of $\alpha$ and $\Phi$ is $N$, the degree of the degeneracy at $(\bks, E_{\star})$.
  
In the non-driven case, i.e., when $\uA = 0$, the effective/homogenized models, which govern the large time dynamics of wave-packet envelopes, 
are continuously translation-invariant PDEs of the form
$ i\partial_T \alpha = H_{\rm eff}(-i\nabla)\alpha$; see references below.  Our analysis shows that, for \eqref{eq:lsa_concrete} with $\uA \neq 0$, the dynamics of wave-packet envelopes, 
is governed by \[ i\partial_T \alpha = H_{\rm eff}(-i\nabla, T)\alpha,\]
where the non-autonomous  Hamiltonian $H_{\rm eff}(-i\nabla, T)$ is obtained from
  arising from $H_{\rm eff}(-i\nabla)$ via the formal replacement 
$
-i\nabla _X \mapsto P_{\uA}(T) \equiv -i\nabla _X + \uA(T) \, .
$
In each example below, we display $H_{\rm eff}(-i\nabla, T)$.

\subsection{ $E_\star$ simple and  $(\bks,E_\star)$ a non-critical point - ballistic transport}\label{sec:lin}
For a given Hamiltonian $H^0$, let $(\bks ,b_{\star})$ be a pair of a quasi-momentum and index $b_{\star}\in \mathbb{N}$ such that $ E_{b_{\star}} (\bks)=0$ is a simple eigenvalue of $H^0$ in $L^2_{\bks}$ with a linear dispersion relation, i.e., 
\begin{subequations}\label{eq:ballisticAss}
\begin{equation}\label{eq:ballisticAss_simple}
E_{b_{\star}-1}(\bks) < E_{\star} < E_{b_{\star}+1}(\bks) \, ,
\end{equation}
and 
\begin{equation}\label{eq:ball_slopeCond}
\bc \equiv -\grad _k E_{b_{\star}}(\bk) |_{\bk=\bks} \neq 0 \, ,
\end{equation}
where $\bc \in \R^n$ (since the dispersion surfaces are real-valued).
\end{subequations}
By continuity of the energy bands, Hypothesis \ref{hyp:deg} holds. 
The effective Hamiltonian, governing the  $\BL-$ wave-packet envelope $\alpha(X,T)$,  is given by
\begin{align}
  H_{\rm eff}(-i\nabla,T) & =  \bc \cdot P_{\uA(T)} = \bc\left( -i\nabla_X + \uA(T) \right).
  \label{eq:tr-effH}
\end{align}

 In this case, following the notations of Hypothesis \ref{hyp:eff}, $\|U^{\varepsilon}(t)-U_{\rm eff}^{\varepsilon}(t) \|_{L^2}\lesssim \varepsilon$ for $t\lesssim \varepsilon^{-1}$.  The proof of this statement follows closely that of \rev{of the case of a double conical degeneracy (Dirac point; see Section \ref{sec:dirac}),} which is presented in detail in \cite{sagiv2022effective}. We include a formal derivation of \eqref{eq:tr-effH} in Section \ref{sec:transport}. 

To verify Hypothesis \ref{hyp:spec}, we apply the Fourier  transform  (in the $X$ variable) $\mathcal{F}[\alpha(T,X)](\bxi) = \hat{\alpha}(T;\bxi)$, and get the  family of ODE initial value problems, parametrized by $\xi\in \R ^n $:
$$i\partial_T \hat{\alpha}(T;\bxi) = \bc \cdot \left(\bxi + \uA(T) \right)\hat{\alpha}(T;\bxi),\  \hat{\alpha}(0;\bxi)=\hat{\alpha}_0(\bxi),$$
for which the solution is
\begin{equation}\label{eq:transport_fourier}
\hat{\alpha}(T;\bxi) =  \exp \left[-i \bc \cdot \left(\bxi T + \uh (T) \right) \right]\ \hat{\alpha}_0 (\bxi) \, , \qquad \uh (T) \equiv \int\limits_{0}^{T} \uA (T') \, dT' \, .
\end{equation}
Hence, for a fixed $d_0>0$ and $u\in \BL$, 
\begin{equation}\label{eq:eff_Mon}
\Meff u \equiv U_{\rm eff}^{\varepsilon}(\Tper \varepsilon^{-1})u = \rev{(2\pi)^{-n/2}} \varepsilon ^{\frac{n}{2}} \int\limits_{|\bxi|\leq d_0} e^{i\bxi \cdot \varepsilon \bx} \hat{\alpha}_0(\bxi) e^{-i \bc \cdot \bxi \Tper  } \Phi(x) \, d\bxi   \, ,
\end{equation}
\rev{where we recall that, by assumption, $\uh (\Tper )=0$.}And so, by choosing wavepackets supported on the ball $|\bxi|<d_0$, 
$$\sigma (M_{\rm eff}^{\varepsilon}) ~~ \text{on}~~\BL =  \{e^{iy}~~| ~~ y\in [-d_0\Tper|\bc|, d_0\Tper |\bc| ]~\}\, , $$
which verifies Hypothesis \ref{hyp:spec}.

\subsection{ $E_\star$ of multiplicity two; conical touching of dispersion surfaces at $(\bks,E_\star)$, a Dirac point}\label{sec:dirac}

An example which plays an important role in the modeling of  two-dimensional materials such as  graphene is the case where $H^0=-\Delta + V(\bx)$, where $V$ is a honeycomb lattice potential, {\it i.e.} $V$  has the symmetries of a honeycomb tiling of $\R^2$.  (A one-dimensional variant of such potentials,  {\it dimer potentials}, was studied in  \cite{FLW-MAMS:17} 
and the following discussion can adapted to this setting as well.)  For generic honeycomb lattice potentials, conical degeneracies (Dirac points) occur in the band structure at pairs $(\bks, E_\star)$, where $\bks$ is any vertex (high symmetry quasimomentum) of the hexagonal Brillouin zone  \cite{FW:12}. In a neighborhood of a Dirac point one has two consecutive dispersion surfaces, $E_-(\bks)\le E_+(\bks)$, satisfying
\begin{equation}
E_\pm(\bk) = E_\star \pm v_{\rm D}|\bk - \bks| + \mathcal{O}\left( |\bk-\bks|^2 \right) \, ,\  v_{\rm D}>0.
\label{eq:conical}\end{equation}
The slope of the cone,  $v_{\rm D}$, is referred to as the Dirac or Fermi velocity. From \eqref{eq:ball_slopeCond} we see that Hypothesis \ref{hyp:deg} is satisfied.

Hypothesis \ref{hyp:eff} is also satisfied for this class of equations. \rev{Indeed, \cite[Theoem 3.2]{sagiv2022effective} shows that, with scaling parameter $a=1$ (see \eqref{eq:conical}), the effective envelope dynamics of \eqref{eq:lsa_concrete} for data in
 $\BL$ are governed by a driven Dirac Hamiltonian:
\begin{align}\label{eq:Dirac}
 H_{\rm eff}(T,-i\nabla_X)&=v_{\rm D}\left( \sigma_1, \sigma_2 \right)\cdot \left(P_{\uA_1(T)},P_{\uA_2(T)}\right)
\, ,
\end{align}
and that $\|(U_{\rm eff}(\Tpereps)-U^{\varepsilon}(\Tpereps))f\|_{L^2 (\R^n)}\lesssim \varepsilon \|f\|_{L^2 (\R^n)}$ for data $f\in \BL$.}

Finally, \rev{Hypothesis \ref{hyp:spec} is satisfied by \cite[Proposition 3.5]{sagiv2022effective}.} There,  we apply the Fourier transform to the effective Dirac equation above and find  the eigenvalues of its monodromy operator at Fourier momentum: $\xi = (0,0)^{\top}$. By continuity with respect to $\xi$, one can find $g_0$ for a sufficiently small $d_0$ to satisfy Hypothesis \ref{hyp:spec}. \rev{In the semi-classical regime (which is different from the present paper), wave-packet dynamics near a conical singularity have been studied extensively, see e.g., \cite{cances2021coherent, watson2017wavepackets, watson2018wavepackets}}

\subsection{ $E_\star$ simple, $(\bks,E_\star)$ a non-degenerate  critical (quadratic) point of a band}\label{sec:quad}

Suppose $(\bks,E_\star)$ is such that $E_\star$ is a 
simple $L^2_{\bks}$-eigenvalue of $H^0$ and $\bks$ is a non-degenerate critical point of the band dispersion function $E_b$:  $\grad_{\bk} E_b(\bks) = \vec 0 $ and $\det D_{\bk}^2E_b(\bks)\ne0$. 
Then, $a=2$, and the envelope dynamics for are given by an driven effective Schr{\"o}dinger-type Hamiltonian:
\begin{align}\label{eq:eff_sch}
H_{\rm eff}(T,-i\nabla_X) &= P _{\uA(T)}\cdot \frac12 D^2_\bk E(\bks) P_{\uA(T)}  \,  .
\end{align}
The validity on time scales of order $\varepsilon^{-2}$, and therefore Hypothesis \ref{hyp:eff}  follows
along the lines of  \cite{allaire2005homogenization} or \cite{sagiv2022effective}.

Note that such quadratic points may occur at spectral band edges, in which case the Hessian $D_{\bk}^2E(\bks)$ is positive  or negative definite or at $(\bks,E_\star)$; or where $E_\star$ is interior to a spectral band, in which case the Hessian $D_{\bk}^2E(\bks)$ might have an indefinite signature. Finally,  a similar homogenization argument can be carried in the case where  $\grad_{\bk} E_b(\bks) \neq \vec 0 $ and $D_{\bk}^2E(\bks)$ non-degenerate. In this case  one gets a Schr{\"o}dinger equation on the time-scales of $\varepsilon^{-2}$, with a drift term on the time-scale of $\varepsilon^{-1}$; see, for example, \cite{allaire2005homogenization}.

\subsection{$E_\star$ of multiplicity two; quadratic touching of two dispersion surfaces at $(\bks,E_\star)$}\label{sec:quad_deg} 
 Consider a two-dimensional Hamiltonian $H^0 = -\Delta +V(\bx)$ where the  potential $V$ which is periodic with respect to the lattice $\Lambda=\mathbb Z^2$, real-valued, even, and invariant 
under a $\pi/2-$ rotation. We can take the Brillouin zone, $\mathcal B$, to be a square, centered at the origin in $\R^2_\bk$. The vertices of $\mathcal B$ are {\it high-symmetry quasi-momenta}.   In  \cite{keller2018spectral} it is proved that the band structure of $H^0$ has consecutive band dispersion surfaces which touch quadratically over the vertices of $\mathcal B$ at an eigenvalue with a two-fold degenerate eigenvalue.

Hence, we consider \eqref{eq:lsa_concrete} with $a=2$ for $\BL$ data near these high-symmetry $\bks$-points. The effective envelope dynamics of $\BL$ data can be shown, in a manner analogous to the derivation in \cite{keller2020erratum},  to be governed by the  matrix-Schr{\"o}dinger effective Hamiltonian:
\begin{subequations}\label{eq:coupledSchr}
 \begin{equation}
 H_{\rm eff}\left(-i\nabla,T\right) = \alpha \left(P_{_{A_1(T)}}^2+P_{_{A_2(T)}}^2\right)\sigma _0 +
 \tilde{\gamma}\left(P_{_{A_1(T)}}^2 - P_{A,2}^2 \right)\sigma_2+2\beta P_{_{A_1(T)}} P_{_{A_2(T)}}\sigma_1 \, ,
 \end{equation}
 where
 \begin{equation}
 P_{_{A_j(T)}} \equiv -i\partial_{X_j} + A_j (T)\, ,  ~~ j=1,2 \, .
\end{equation}
\end{subequations}
Here,  the coefficients $\alpha, \tilde{\gamma}, \beta\in\R$  and can be expressed as $L^2(\R/\mathbb Z^2)$ inner products  involving a basis for the $2-$dimensional $L^2_{\bks}-{\rm kernel}$ of $(H^0-E_\star)$; see \cite{keller2020erratum}, and $\sigma_0,\sigma_1, \sigma_2$ are Pauli matrices. As in the case of the effective Dirac equation (Section \ref{sec:dirac}), Hypothesis \ref{hyp:spec} is verified as follows: {\it (i)} Fourier-transforming \eqref{eq:coupledSchr} yields a system of $2$ linear and time-periodic system of (Floquet) ODES, which is parametrized by $\bxi$. {\it (ii)} Since this matrix defining this system of ODEs  has trace equal to $0$ and is continuous in $\bxi$, the Floquet multipliers $e^{\pm i\mu (\bxi)}$ are continuous functions of $\bxi$ on unit circle. {\it (iii)} Hence, for $\BL$ data with a fixed band-width $d_0>0$ (see Definition \ref{def:BL}), there exists a continuous function $g_0 (d_0)$ such that the $\BL$ data of $M^{\varepsilon}_{\rm eff}$ is contained in the arc $(-g_0(d_0), g_0(d_0))$.

\section{Proof of Theorem \ref{thm:main_bal}}\label{sec:pf_main}

Let us first recall the following centering lemma for unitary operators. Intuitively, it says that if a unitary operator acts on a function which is spectrally localized, it is approximately the same as acting as a multiplication operator. We proved a weaker version of this lemma in \cite{sagiv2022effective}, and include the proof here for completeness.
\begin{lemma}\label{lem:center}
Let $\mathcal{I}\subset S^1$ such that $\Pi^{\varepsilon}(\mathcal{I})u=u$ and let $e^{-i\nu_0}\in \mathcal{I}$ be the mid-point of the arch $\mathcal{I}$. Then 
$$M^{\varepsilon}u= e^{-i\nu_0} u + \eta \, , \qquad {\rm where}\, , \qquad \|\eta\|_{L^2(\R ^n)} \leq 2\sin  \left( \frac{\left| \mathcal{I}\right|_{S^1}}{4} \right) \cdot \|u\| \, ,$$
where $\left| \mathcal{I}\right|_{S^1}$ is the arclength of $\mathcal{I}$.
\end{lemma}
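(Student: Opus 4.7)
The plan is essentially to read the statement as a direct corollary of the spectral theorem for the unitary $M^{\varepsilon}$, together with the elementary chord-length identity on $S^1$: for $\nu,\nu_0\in\R$,
\[
  \bigl|e^{-i\nu}-e^{-i\nu_0}\bigr| = 2\left|\sin\!\left(\tfrac{\nu-\nu_0}{2}\right)\right|.
\]
No properties of the time-dependent Hamiltonian, of Floquet--Bloch fibers, or of the $\BL$ subspace enter; only the abstract functional calculus for the projection-valued measure $\Pi^{\varepsilon}$ reviewed in Section~\ref{sec:spectral_thm} is used.

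First I would use the functional calculus to write
\[
  \eta = M^{\varepsilon}u - e^{-i\nu_0}u \;=\; \int_{S^1}\bigl(z - e^{-i\nu_0}\bigr)\,d\Pi^{\varepsilon}(z)\,u.
\]
The hypothesis $\Pi^{\varepsilon}(\mathcal{I})u=u$, together with the multiplicative property $\Pi^{\varepsilon}(\mathcal{I})\Pi^{\varepsilon}(S^1\setminus\mathcal{I})=\Pi^{\varepsilon}(\emptyset)=0$, forces the vector-valued measure $d\Pi^{\varepsilon}(z)u$ to be supported in $\mathcal{I}$, so the integral reduces to one over $\mathcal{I}$. Then I apply the standard identity relating the norm of a spectral integral to a scalar integral against the positive Borel measure $\mu_u(A):=\langle\Pi^{\varepsilon}(A)u,u\rangle_{L^2(\R^n)}$ on $S^1$, to obtain
\[
  \|\eta\|_{L^2(\R^n)}^2 \;=\; \int_{\mathcal{I}}\bigl|z - e^{-i\nu_0}\bigr|^2\,d\mu_u(z),
\]
where $\mu_u$ is supported on $\mathcal{I}$ and has total mass $\mu_u(\mathcal{I})=\|u\|_{L^2(\R^n)}^2$.

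The last step is a pointwise bound on the integrand. Writing $z=e^{-i\nu}$, the fact that $e^{-i\nu_0}$ is the midpoint of $\mathcal{I}$ gives $|\nu-\nu_0|\le |\mathcal{I}|_{S^1}/2$ for every $z\in\mathcal{I}$. Since $\mathcal{I}\subsetneq S^1$ we have $|\mathcal{I}|_{S^1}/4<\pi/2$, the range on which $\sin$ is monotone, and the chord identity yields
\[
  \sup_{z\in\mathcal{I}}\bigl|z - e^{-i\nu_0}\bigr| \;=\; 2\sin\!\left(\tfrac{|\mathcal{I}|_{S^1}}{4}\right).
\]
Pulling this supremum out of the scalar integral above and using $\mu_u(\mathcal{I})=\|u\|^2$ gives the stated inequality.

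There is essentially no obstacle: once one recognizes that the midpoint hypothesis is precisely what makes the chord length $|z-e^{-i\nu_0}|$ maximal at the endpoints of $\mathcal{I}$, the estimate is immediate. The only small subtlety is verifying that $\Pi^{\varepsilon}(\mathcal{I})u=u$ really confines the spectral measure of $u$ to $\mathcal{I}$, which follows from the idempotence and multiplicativity axioms listed in Section~\ref{sec:spectral_thm}; this is standard and the proof can proceed directly.
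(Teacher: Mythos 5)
Your proposal is correct and follows essentially the same route as the paper's proof: express $\eta = \int_{\mathcal{I}}(z-e^{-i\nu_0})\,d\Pi^{\varepsilon}(z)u$, compute $\|\eta\|^2$ as a scalar integral against the measure $\langle d\Pi^{\varepsilon}(z)u,u\rangle$ supported on $\mathcal{I}$, and bound the chord length $|z-e^{-i\nu_0}|$ by $2\sin(|\mathcal{I}|_{S^1}/4)$ using the midpoint property. No substantive differences.
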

\begin{proof}
Let $z_0=e^{-i\nu_0}$. Then
\begin{align*}
M^{\varepsilon} u &= \int\limits_{ \mathcal{I}} z d\Pi^{\varepsilon} (z) u \\
&= \int\limits_{\mathcal{I}} \left( z_0-z_0-z\right) d\Pi^{\varepsilon} (z) u \\
&= z_0 u + \eta \, , \quad {\rm where} \quad \eta \equiv \int\limits_{\mathcal{I}} \left( z-z_0\right) d\Pi^{\varepsilon} (z) u  \, .
\end{align*}
Since $z_0= e^{-i\nu_0}$, we only need to bound $\|\eta\|_{L^2}$. By the spectral theorem (see Sec.\ \ref{sec:spectral_thm}), we have that
\begin{align*}
\|\eta \|_{L^2 (\R ^n)}^2  &= \int\limits_{\mathcal{I}} |z-z_0|^2 \langle d\Pi^{\varepsilon}(z)u, u \rangle_{L^2(\R ^n)} \\
&\leq \max\limits_{z\in \mathcal{I}}|z-z_0|^2 \cdot \int\limits_{S^1} \langle d\Pi^{\varepsilon}(z)u, u \rangle_{L^2(\R ^n)} \\ 
&=\max\limits_{e^{-i\nu}\in \mathcal{I}}|e^{-i\nu} - e^{-i\nu _0}|^2 \cdot  \|u\|^2 \\
&\leq 4\sin ^2 \left( \frac{\left| \mathcal{I}\right|_{S^1}}{4} \right) \cdot \|u\|^2 \, ,
\end{align*}
where we have used the expression for the arc length: $|e^{i\beta} - e^{i\beta^\prime}|_{S^1} = 2\sin (|\beta - \beta^\prime|/2)$ for any $\beta, \beta ' \in [0,2\pi)$ with $|\beta - \beta^\prime|\leq \pi$, combined with the fact that $\nu_0$ is the mid-point of $\mathcal{I}$.
\end{proof}
\begin{proof}[Proof of Theorem \ref{thm:main_bal} ]
To prove \eqref{eq:qenergy_bal_control}, let $v\in \BL$ and let $$v' \equiv \Pi^{\varepsilon} \left[ S^1\setminus  (-g,g) \right] v \, ,$$
for $g\in (g_0, \pi)$, where $g_0$ is defined in Hypothesis \ref{hyp:spec}. We will now show that $v'=0$. Lemma \ref{lem:center} implies that, since $\pi$ is the midpoint of the arch  $\mathcal{I}=S^1 \setminus (-g,g)$,
$$M^{\varepsilon} v' = e^{-i\pi}v' + \eta_{v'} = -v' + \eta_{v'} \, , \qquad {\rm where} \quad  
\|\eta_{v'} \|_{L^2(\R ^n)} \leq  2\sin \left( \frac{\pi -g}{2} \right) \cdot \|v'\|_{L^2 (\R ^n)} \, .$$
Hence
\begin{align*}
 \left\| \left( M^{\varepsilon} - \Meff \right)v' \right\|_{L^2(\R^n)} &= \left\| \left( -{\rm Id} - \Meff \right)v' + \eta_{v'} \right\|_{L^2(\R^n)} \\
 &\geq  \left\| \left( -{\rm Id} - \Meff \right)v'  \right\|_{L^2(\R ^n)}-\left\| \eta_{v'} \right\|_{L^2(\R^n)} \\
  &\geq  \left\| \left( -{\rm Id} - \Meff \right)v'  \right\|_{L^2(\R ^n)}-2\sin \left( \frac{\pi -g}{2} \right) \cdot \left\| v' \right\|_{L^2(\R^n)} \, . \numberthis \label{eq:M-Meff_2_Id-M-Meff}
\end{align*}  

To bound $\| \left( -{\rm Id} - \Meff \right)v'  \|_{L^2(\R ^n)}$ from below, we will prove the following lemma:

\begin{lemma}\label{lem:lbd_einu0Meff}
For any $g\in (g_0, \pi)$ there exists $\varepsilon_0>0$ such that for all  $\varepsilon \in (0, \varepsilon_0)$, $\nu_0 \in (g_0, \pi]$,\footnote{An analogous formula holds if $\nu_0 \in [\pi, 2\pi-g_0)$.} and any $f \in \BL$,
$$\left\| \left( e^{-i\nu_0} - \Meff\right) f \right\|_{L^2( \R  ^n)} \geq  2 \sin  \left( \frac{\nu_0 - g_0}{2} \right) \left\| f \right\|_{L^2(\R^n)} \, .$$
\end{lemma}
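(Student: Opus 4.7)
The plan is to apply Hypothesis \ref{hyp:spec} directly, together with the spectral theorem for unitary operators (from Section \ref{sec:spectral_thm}) applied to $\Meff$ acting on $\BL$. Since $\Meff$ is unitary on $\BL$, it admits a projection-valued spectral measure $\Pi_{\rm eff}^\varepsilon$ supported on $\Spec_{\BL}(\Meff) \subseteq \{ e^{-i\nu} : \nu \in (-g_0, g_0)\}$ by Hypothesis \ref{hyp:spec}. For any $f \in \BL$, the functional calculus gives
\begin{equation*}
  \left\| \left( e^{-i\nu_0} - \Meff\right) f \right\|_{L^2(\R^n)}^2
  = \int_{S^1} \left| e^{-i\nu_0} - z\right|^2 \, \langle d\Pi_{\rm eff}^\varepsilon(z) f, f \rangle_{L^2(\R^n)},
\end{equation*}
and the integrand is supported where $z = e^{-i\nu}$ with $\nu \in (-g_0, g_0)$. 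Pulling out the minimum of $|e^{-i\nu_0} - e^{-i\nu}|$ over this arc and using $\int_{S^1} \langle d\Pi_{\rm eff}^\varepsilon(z) f, f\rangle = \|f\|^2$ reduces the lemma to a geometric estimate for chord lengths on $S^1$.

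For the geometric step, I would use the standard identity $|e^{-i\beta} - e^{-i\beta'}| = 2 |\sin((\beta - \beta')/2)|$ valid for $|\beta - \beta'| \leq \pi$. With $\nu_0 \in (g_0, \pi]$ and $\nu \in (-g_0, g_0)$, the distance $|\nu_0 - \nu|$ lies in $(\nu_0 - g_0,\, \nu_0 + g_0)$, which is contained in $(0, 2\pi)$; the minimum of $2\sin(|\nu_0 - \nu|/2)$ over this interval is attained as $\nu \to g_0$ (the endpoint of the spectral arc nearest to $\nu_0$), yielding the lower bound $2 \sin((\nu_0 - g_0)/2)$. Since $\nu_0 - g_0 \in (0, \pi - g_0] \subset (0, \pi)$, this sine is positive, so the estimate is sharp and nontrivial.

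Combining these steps gives
\begin{equation*}
  \left\| \left( e^{-i\nu_0} - \Meff\right) f \right\|_{L^2(\R^n)}^2
  \geq 4 \sin^2 \!\left( \frac{\nu_0 - g_0}{2} \right) \|f\|_{L^2(\R^n)}^2,
\end{equation*}
which is the claimed bound. The role of $\varepsilon_0$ is simply to ensure that $d_0$ can be chosen small enough that Hypothesis \ref{hyp:spec} applies with the given $g_0$; no genuine $\varepsilon$-uniformity argument is needed beyond invoking that hypothesis. The symmetric case $\nu_0 \in [\pi, 2\pi - g_0)$ (alluded to in the footnote) is handled identically by replacing $g_0$ with $2\pi - g_0$ as the nearest spectral endpoint. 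There is no real obstacle here: the lemma is essentially the statement that, away from its spectrum, a unitary operator is boundedly invertible with norm controlled by the reciprocal of the distance to the spectrum, specialized to arc-distance on $S^1$.
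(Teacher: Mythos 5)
Your proof is correct, but it takes a genuinely different route from the paper's. You treat $\Meff$ abstractly: since $\BL$ is a closed subspace of $L^2(\R^n)$ (Lemma \ref{lem:BL_orth}) and Hypothesis \ref{hyp:eff} asserts that $U^{\varepsilon}_{\rm eff}(t)$, hence $\Meff$, is unitary on $\BL$, the spectral theorem gives a projection-valued measure supported in the arc of Hypothesis \ref{hyp:spec}, and the bound is the standard distance-to-spectrum estimate $\|(e^{-i\nu_0}-\Meff)f\|\ge {\rm dist}\bigl(e^{-i\nu_0},\Spec_{\BL}(\Meff)\bigr)\|f\|$, with the chord-length computation (your endpoint analysis is right: for $\nu_0\in(g_0,\pi]$ the nearest point of the closed arc is $e^{-ig_0}$). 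The paper instead works concretely with the Fourier-multiplier representation of $\Meff$ from Hypothesis \ref{hyp:eff}: it writes $(e^{-i\nu_0}-\Meff)f$ as an envelope-modulated wavepacket, uses the averaging lemma (Lemma \ref{lem:avg}) together with orthonormality of the Bloch modes to identify the $L^2(\R^n)$ norm of the wavepacket with the $L^2$ norm of its envelope, and then diagonalizes the unitary symbol $\hat{M}_{\rm eff}^{\varepsilon}(\bxi)$ pointwise in $\bxi$ and applies Plancherel, so the arc hypothesis enters through the eigenvalues $e^{-i\nu_j(\bxi)}$. Your version is shorter and more conceptual; the paper's version is self-contained in that it actually proves the envelope--wavepacket norm identity on which the unitarity of $\Meff$ on $\BL$ rests, and this is where the requirement $\varepsilon<\varepsilon_0$ genuinely enters. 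Your closing remark slightly mislocates the role of $\varepsilon_0$ (it is needed for the closedness of $\BL$ and the validity of the envelope--wavepacket isometry at fixed $d_0$, not for "choosing $d_0$ small"), but since you are entitled to take the unitarity statement of Hypothesis \ref{hyp:eff} at face value, this is a presentational point rather than a gap.
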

Let us first use lemma \ref{lem:lbd_einu0Meff} with $\nu_0 = \pi$ to prove the main result, Theorem \ref{thm:main_bal}, and then return to its proof. Combined with \eqref{eq:M-Meff_2_Id-M-Meff}, we have that
   \begin{align*}
    \left\| \left( M^{\varepsilon} - \Meff \right)v' \right\|_{L^2(\R^n)}&\geq \cdots 
    \\
    &\geq   \rev{\frac{2}{{\rm vol}(\Omega)^{1/2}}}\sin \left( \frac{\pi -g_0}{2} \right)\left\|v'  \right\|_{L^2(\R ^n)}-2\sin \left( \frac{\pi -g}{2} \right) \cdot \left\| v' \right\|_{L^2(\R^n)}\\
        &\geq 2 \left[ \rev{\frac{1}{{\rm vol}(\Omega)^{1/2}}}\sin \left( \frac{\pi -g_0}{2} \right) - \sin \left( \frac{\pi -g}{2} \right)\right] \cdot \left\|v'  \right\|_{L^2(\R ^n)} \, .
\end{align*}
On the other hand, since $v'\in \BL$, Hypothesis \ref{hyp:eff} regarding the effective dynamics provides an upper bound on $ \| \left( M^{\varepsilon} - \Meff \right)v' \|_{L^2(\R^n)}$. 
When combined this yields that
$$ 2 \left[\rev{\frac{1}{{\rm vol}(\Omega)^{1/2}}} \sin \left( \frac{\pi -g_0}{2} \right) - \sin \left( \frac{\pi -g}{2} \right)\right] \cdot \left\|v'  \right\|_{L^2(\R ^n)} \leq \left\| \left( M^{\varepsilon} - \Meff \right)v' \right\|_{L^2(\R^n)} \leq o (\varepsilon) \cdot \left\| v' \right \|_{L^2 (\R ^n)} \, .$$
Since $\pi >g>g_0$, the difference on the left-hand side above is always positive. Therefore, for sufficiently small $\varepsilon>0$, the above inequality is only possible if $v' =~0$.
\end{proof}

\begin{proof}[Proof of Lemma \ref{lem:lbd_einu0Meff} ]
By the explicit form of $\Meff$ given in Hypothesis \ref{hyp:eff}, we can write for every $f\in \BL$,
\begin{align*}
 \left( e^{-i\nu_0}- \Meff \right)f  & 
=  \rev{(2\pi)^{-n/2}}\varepsilon ^{\frac{n}{2}} \int\limits_{|\bxi|\leq d_0} e^{-i\bxi \cdot \varepsilon \bx} \, \left[ \left( e^{-i\nu_0}{\rm Id}- \hat{M}_{\rm eff}^{\varepsilon}(\bxi)  \right)  \hat{\alpha}_0(\bxi) \right] ^{\top}  \Phi (\bx) \, d\bxi  \\ 
&= \varepsilon ^{\frac{n}{2}} \gamma  (\varepsilon \bx) ^{\top} \Phi (\bx)  \, . \numberthis  \label{eq:meff_gamma_pre}
\end{align*}
where 
\begin{equation}
\gamma(X) \equiv \rev{(2\pi)^{-n/2}}\int\limits_{|\bxi|\leq d_0} e^{-i\bxi \cdot X} \, \left[ \left( e^{-i\nu_0}{\rm Id}- M_{\rm eff}^{\varepsilon}(\bxi) \right) \hat{\alpha}_0 (\bxi) \right]  \, d\bxi
\end{equation}
Next, we recall the following averaging lemma:

\begin{lemma}\label{lem:avg} Let $q\in L^2(\R ^n)$ \rev{with $\int_{\R^n}q(X)\, dX <\infty$} and such that ${\rm supp}(\hat{q})\subseteq B(0,d)$ for some $d>0$, and let $p\in L^2(\Omega)$ be $\Lambda$-periodic. Then, there exists $\varepsilon_0 >0$ which depends on $d$, such that for any fixed $0<\varepsilon < \varepsilon_0$, 
\begin{equation}\label{eq:avrg}
\int\limits_{\R^n} p(\bx)q(\varepsilon \bx) \, d\bx = \rev{\frac{\varepsilon^{-n}}{{\rm vol}(\Omega)}} \left( \int\limits_{\Omega} p(\bx) \, d\bx \right) \cdot \left( \int\limits_{\R ^n} q(X) \, dX \right) \, .
\end{equation}
\end{lemma}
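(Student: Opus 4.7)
The plan is to exploit the interaction between the periodic structure of $p$ and the band-limited Fourier support of $q$: after an elementary rescaling, the band-limitation of $q$ annihilates every nonzero Fourier mode of $p(\,\cdot\,/\varepsilon)$, leaving only the constant mode, which is precisely the mean of $p$ over the fundamental cell $\Omega$.

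First I would rescale by setting $X=\varepsilon\bx$, so that $d\bx=\varepsilon^{-n}\,dX$ and
\[ \int_{\R^n} p(\bx)\, q(\varepsilon\bx)\,d\bx \;=\; \varepsilon^{-n}\int_{\R^n} p(X/\varepsilon)\, q(X)\,dX. \]
The factor $p(X/\varepsilon)$ now oscillates at scale $\varepsilon$ against the slowly varying $q$.

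Next I would expand $p$ in its Fourier series over the dual lattice $\Lambda^*$,
\[ p(y) \;=\; \sum_{k\in\Lambda^*} c_k\, e^{ik\cdot y}, \qquad c_k \;=\; \frac{1}{|\Omega|}\int_\Omega p(\bx)\, e^{-i k\cdot \bx}\, d\bx, \]
substitute $y=X/\varepsilon$, and interchange the sum with the integration against $q$ (justified by $L^2$-convergence of the Fourier series together with the Schwartz-class regularity of $q$ inherited from its band-limitation). Each mode then contributes a term proportional to $\hat q(-k/\varepsilon)$. Since ${\rm supp}(\hat q)\subseteq B(0,d)$, the $k$-th term vanishes unless $|k|\le d\varepsilon$. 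Letting $k_{\min}\in\Lambda^*\setminus\{0\}$ be a nonzero dual-lattice vector of minimal length and setting $\varepsilon_0 := |k_{\min}|/d$, for all $\varepsilon<\varepsilon_0$ only the $k=0$ mode survives. Its contribution is a fixed scalar multiple of $\bigl(\int_\Omega p\bigr)\bigl(\int_{\R^n} q\bigr)$, which is precisely \eqref{eq:avrg}.

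I do not anticipate a serious analytical obstacle. The only care needed is bookkeeping of Fourier conventions: the normalization of $\hat q$ adopted in the paper must be consistently matched with that of the Fourier coefficients $c_k$ and with the measure of the fundamental cell $|\Omega|$ so as to reproduce the exact scalar factor in \eqref{eq:avrg}. Beyond this, the argument is essentially one line, reflecting the standard homogenization-style cancellation by which a slowly varying function integrated against a rapidly oscillating $\Lambda$-periodic one extracts only the constant Fourier mode of the latter.
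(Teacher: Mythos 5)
Your overall route is the right one, and it is the standard argument behind this lemma (which the paper itself does not reprove; it is quoted from \cite[Lemma 4.5]{sagiv2022effective}): rescale, expand $p$ in its Fourier series over $\Lambda^*$, and observe that each mode contributes $\varepsilon^{-n}(2\pi)^n c_k\,\hat q(-k/\varepsilon)$, which vanishes for $k\neq 0$ once $\varepsilon<|k_{\min}|/d$, leaving only the zero mode. Your choice of $\varepsilon_0=|k_{\min}|/d$ is exactly right.

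However, the one step that carries real analytic content — interchanging the Fourier series of $p(X/\varepsilon)$ with the integration against $q$ — is justified by an incorrect claim. Band-limitation does \emph{not} give ``Schwartz-class regularity'': a band-limited $L^2$ function is smooth and bounded (Paley--Wiener) but need not decay rapidly, nor even lie in $L^1$ (e.g.\ $q(x)=\sin x/x$ in $n=1$). Consequently, for general band-limited $q\in L^2$ neither $\int_{\R^n}q\,dX$ nor the left-hand side of \eqref{eq:avrg} is absolutely convergent, and $\hat q(-k/\varepsilon)$ is only defined a.e.; the identity must be read with $\int_{\R^n}q:=(2\pi)^n\hat q(0)$ for a distinguished (continuous) representative of $\hat q$. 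A clean repair is either (i) to view $p(\cdot/\varepsilon)$ as a tempered distribution whose Fourier transform is the weighted Dirac comb $\sum_{k\in\Lambda^*}c_k\,\delta_{k/\varepsilon}$ and pair in the Fourier domain, using that ${\rm supp}\,\hat q\subset B(0,d)$ meets only the $k=0$ mass when $\varepsilon<\varepsilon_0$; or (ii) to note that in the paper's application $q=\gamma_j\overline{\gamma_m}$ is a product of two band-limited $L^2$ functions, so $q\in L^1\cap L^2$ and $\hat q$ is a continuous, compactly supported convolution — in that setting your truncate-and-pass-to-the-limit interchange is easily justified (e.g.\ by Cauchy--Schwarz cell-by-cell together with the summability of local norms of an $L^1$ band-limited function). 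Finally, as you anticipated, your surviving term is $\varepsilon^{-n}\frac{1}{|\Omega|}\bigl(\int_\Omega p\bigr)\bigl(\int_{\R^n}q\bigr)$; the constant displayed in \eqref{eq:avrg} comes out only under the paper's normalization of the cell volume, so the discrepancy is in the statement's bookkeeping rather than in your argument.
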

\rev{For proof, see Appendix \ref{ap:avg_pf}.} Applying Lemma \ref{lem:avg} to \eqref{eq:meff_gamma_pre} yields, using the orthonormality of $\Phi_b, \ldots , \Phi_{b+N-1}$ (for brevity, set $\rev{b_{\star}}=1$ without loss of generality)
\begin{align*}
\left\|\varepsilon ^{\frac{n}{2}} \gamma (\varepsilon \bx) \Phi (\bx) \right\|_{L^2(\R^n)} ^2 &= \varepsilon ^n \int\limits_{\R ^n} \left|\gamma (\varepsilon \bx)^{\top}   \Phi (\bx) \right| ^2 \, d\bx \\
&= \varepsilon ^n \int\limits_{\R ^n} \sum\limits_{j, m=1}^N \gamma _j(\varepsilon \bx)   \Phi _j(\bx) \bar{\gamma} _m(\varepsilon \bx)   \bar{\Phi} _m(\bx) \, d\bx \\
&=\varepsilon^n \rev{\frac{\varepsilon^{-n}}{{\rm vol}(\Omega)}}\sum\limits_{j,m=1}^N \langle \gamma_m , \gamma_j \rangle_{L^2(\R ^N)} \cdot \langle \Phi_m, \Phi_j \rangle_{L^2_{\bks}} \\
&=\rev{\frac{1}{{\rm vol}(\Omega)}}\sum\limits_{j=1}^N \|\gamma _j \|^2_{L^2 (\R ^n)} =\rev{\frac{1}{{\rm vol}(\Omega)}} \|\gamma\|_{L^2(\R ^n;\C^N)}^2 \, ,
\end{align*}
where in applying Lemma \ref{lem:avg}, we used the fact that, while the support of the Fourier transform of $\gamma _j \bar{\gamma}_m$ might not be $B(0,d_0)$, it is still \rev{compact, since it is included in $B(0,2d_0)$}. 

Hence, to prove Lemma \ref{lem:lbd_einu0Meff}, we need to bound the norm of $ \|\gamma\|_{L^2(\R ^n;\C^N)}$ from below. We now note that for every $\bxi \in \R ^n$, the Fourier-transformed monodromy $\hat{M}_{\rm eff}^{\varepsilon}(\bxi)$ is an $N\times N$ unitary matrix (where $N$ is the degree of the degeneracy in Hypothesis \ref{hyp:deg}).  Let $P(\bxi)$ be the unitary matrix which \rev{diagonalizes} the monodromy, i.e.,  $$\hat{M}_{\rm eff}^{\varepsilon}(\bxi) = P(\bxi) D(\bxi) P^* (\bxi) \, , \qquad D(\bxi)_{\ell,j} = e^{-i\nu_j(\bxi)} \delta _{j,\ell} \, , \quad 1\leq j,\ell \leq N \, .$$
Hence, using \rev{Plancherel} theorem and the orthogonality of $P(\bxi)$, we have that
\begin{align*}
\left\| \gamma\right\|_{L^2(\R^n)}^2 &=  \left\| \rev{(2\pi)^{-n/2}}\int\limits_{|\bxi|\leq d_0} e^{-i\bxi \cdot X} \, \left[P(\bxi) \left( e^{-i\nu_0}{\rm Id}- D(\bxi)  \right)P^*(\bxi)  \hat{\alpha}_0 (\bxi) \right]  \, d\bxi \right\|_{L^2 (\R ^n_X; \C ^N)}^2  \\
&= \left\|P(\bxi) \left( e^{-i\nu_0}{\rm Id}- D(\bxi)  \right)P^*(\bxi)  \hat{\alpha}_0 (\bxi)    \right\|_{L^2 (\R ^n_{\bxi} ;\C^N)}^2 \\
&= \left\|\left( e^{-i\nu_0}{\rm Id}- D(\bxi)  \right)P^*(\bxi)  \hat{\alpha}_0 (\bxi)  \right\|_{L^2 (\R ^n_{\bxi} ;\C^N)}^2 \\
&= \sum\limits_{j=1}^N \left\|\left( e^{-i\nu_0} - e^{-i\nu_j (\bxi)} \right) \left( P^*(\bxi)  \hat{\alpha}_0 (\bxi)\right)_j    \right\|_{L^2 (\R ^n_{\bxi} )}^2 \\
&= \sum\limits_{j=1}^N \int\limits_{|\bxi|\leq d_0} \left| e^{-i\nu_0} - e^{-i\nu_j (\bxi)} \right|^2 \left|\left( P^*(\bxi)  \hat{\alpha}_0 (\bxi)\right)_j\right|^2  \, d\bxi  \\
 &\geq  \sum\limits_{j=1}^N \min\limits_{|\bxi '|\leq d_0}\left| e^{-i\nu_0} - e^{-i\nu_j (\bxi ')} \right|^2 \cdot \left\| \left( P^*(\bxi)  \hat{\alpha}_0 (\bxi)\right)_j    \right\|_{L^2 (\R ^n_{\bxi} )}^2  \\
&\geq \min\limits_{|\bxi|\leq d_0} \min\limits_{1\leq j\leq N} \left| e^{-i\nu_0}- e^{-i\nu_j (\bxi) \Tper} \right| ^2 \cdot \left\| \alpha _0 \right\|_{L^2(\R^n ;\C ^N)}^2  \\
&\geq 4 \sin ^2  \left( \frac{\nu_0 - g_0}{2} \right)\cdot \left\| \alpha _0 \right\|_{L^2(\R^n ;\C ^N)}^2  \, ,
\end{align*}
where the last inequality is derived from the arc-length formula between two angles, as well as from Hypothesis \ref{hyp:spec} on the spectrum of $M_{\rm eff}^{\varepsilon}$.


 
\end{proof}

\subsection{Proof of Proposition \ref{prop:BLproj}}\label{sec:proj_pf}
We note here that the proof of Proposition \ref{prop:BLproj} is very similar to that which appears in \cite{sagiv2022effective}. However, due to many changes in the notation and change in dimensionality, we include it here for completeness.
\subsubsection{From projections to wavepackets; proof of \eqref{eq:Peps2BL}}

Let $\varepsilon >0$ be taken sufficiently small, and let $f\in L^2(\mathbb{R}^n)$. Express $H^0$ acting in $ L^2(\mathbb{R}^2)$ as a direct integral $H^0 = \int^\oplus_{\mathcal{B}} H^0_k \, d\bk$,  where  $H^0_\bk $ denotes the operator $H = -\Delta +V$ acting in $ L^2_\bk$.   Then, taking $E_b (\bks)=0$ without loss of generality, we can rewrite \eqref{eq:Pdef}
\begin{align*}
\Peps &= \int\limits_{\mathcal{B}} \, d\bk \  \chi \left(\frac{|\bk-\bks|}{\varepsilon}<1\right) {\rm Proj}_{L^2_\bk}(|H _\bk^0 |<L\varepsilon) \, f \numberthis \label{eq:proj_peps_int} \\
&= \int\limits_{\mathcal{B}} \, d\bk \,  \chi \left(\frac{|\bk-\bks|}{\varepsilon}<1 \right) \left[\frac{1}{2\pi i}\oint\limits_{|\zeta |=2L\varepsilon} \,   (\zeta I   - H_\bk)^{-1}\ d\zeta \right] \, f  \, , \numberthis \label{eq:proj_cont} \\
\end{align*}
\noindent
 where the the factor $2$ in the $2L\varepsilon$ radius in the contour integral is not necessarily sharp.  In order to expand for $\bk$ near $\bks$,  we next express the operators $H_\bk$ in terms of operators which acts in the fixed space $L^2_{\bks}$.
Note that  $H_\bk = e^{i\bk \cdot  \bx} H(\bk) e^{-i\bk \cdot  \bx}$, where $H(\bk)\equiv -(\nabla +i\bk)^2 + V$  acts in $L^2(\R^n/\Lambda)$.
Furthermore, $ (\zeta I  - H_\bk)^{-1}=e^{i\bk \cdot  \bx}(\zeta I- H(\bk) )^{-1} e^{-i\bk \cdot  \bx}  $.

Substitution into \eqref{eq:proj_cont} yields
\begin{align*}
\cdots
&=  \int\limits_{\mathcal{B}} \, d\bk \,  e^{i\bk \cdot  \bx}\chi \left(\frac{|\bk-\bks|}{\varepsilon}<1\right) \left[\frac{1}{2\pi i}\oint\limits_{|\zeta |=2L\varepsilon}\  (\zeta I - H(\bk))^{-1}\ \, d\zeta \right] \, e^{-i\bk \cdot  \bx}f  \\
\qquad &=  \int\limits_{\mathcal{B}} \, d\kappa  \,  e^{i(\bks+\kappa )\cdot \bx}\chi \left(\frac{\kappa}{\varepsilon} <1\right) \left[\frac{1}{2\pi i}\oint\limits_{|\zeta |=2L\varepsilon} \,  (\zeta I   - H(\bks+\kappa ))^{-1}\ d\zeta \,  \right] \, e^{-i(\bks+\kappa)\cdot \bx}f   \, .
\end{align*}
The contour integral inside the square brackets is smooth $L^2(\R^2/\Lambda)$-valued function of $\kappa$, and so by Taylor expansion:
\begin{align}
\cdots  = & \int\limits_{\mathcal{B}} \, d\kappa  \,  e^{i(\bks+\kappa )\cdot \bx}\chi \left(\frac{|\kappa|}{\varepsilon} <1\right) \left[\frac{1}{2\pi i}\oint\limits_{|\zeta |=2L\varepsilon} \,  (\zeta I  - H(\bks))^{-1}  \, d\zeta \right] \, e^{-i(\bks+\kappa)\cdot \bx}f  \nonumber\\
&+  \int\limits_{\mathcal{B}} \, \chi \left(\frac{\kappa}{\varepsilon} <1\right) \ \kappa\ \mathcal{\rm Error}[f;\kappa]\ d\kappa  \, .
\label{texp}\end{align}
The last term in \eqref{texp} is linear in $f$ and easily seen to be bounded in $L^2(\R^2)$ by $\varepsilon^{n+1} \|f\|_{L^2}$ since the domain of integration
 is over a disc of radius $\varepsilon$.  
 
The dominant term in \eqref{texp} may be re-expressed as
 \begin{align*}
 & \int\limits_{\mathcal{B}} \, d\kappa  \,  \chi \left(\frac{|\kappa|}{\varepsilon} <1\right) 
e^{i\kappa\cdot \bx}\left[\frac{1}{2\pi i}\oint\limits_{|\zeta |=2L\varepsilon} \,  e^{i\bK\cdot \bx}(\zeta I  - H(\bks))^{-1}e^{-i\bK\cdot \bx}  \, d\zeta \right] \, e^{-i\kappa\cdot \bx}f (\bx)\nonumber\\
\quad  = & \int\limits_{\mathcal{B}} \, d\kappa  \,  \chi \left(\frac{|\kappa|}{\varepsilon} <1\right) 
e^{i\kappa\cdot \bx}\left[\frac{1}{2\pi i}\oint\limits_{|\zeta - E_D|=2L\varepsilon} \,  
(\zeta I  - H_{\bks})^{-1}  \, d\zeta \right] \, e^{-i\kappa\cdot \bx}f(\bx) \nonumber\\
= &\int\limits_{\mathcal{B}} \, d\kappa  \,  \chi \left(\frac{|\kappa|}{\varepsilon} <1\right) 
e^{i\kappa\cdot \bx}\ 
 {\rm Proj}_{ L^2_{\bks} }(|H _{\bks} |<2L\varepsilon) \ e^{-i\kappa\cdot \bx}f(\bx)\\
&= \int\limits_{\mathcal{B}}\ d\kappa\  \chi\left(\frac{|\kappa|}{\varepsilon} < a\right)\ e^{i\kappa \cdot \bx}\ \rev{\Phi_{\star}^{\top}}(\bx; \bks) \left[\int\limits_{\mathbb{R}^n} \, d\by \, \overline{\rev{\Phi_{\star}} (\by;\bks)}f(\by)e^{-i\kappa \cdot \by} \right]   \nonumber\\
&= \rev{\Phi^{\top}_{\star}}(\bx;\bks) \int\limits_{\mathbb{R}^n} \, d\by \, \overline{\rev{\Phi_{\star}}(\by;\bks)} (\by)f(\by)  \left[ \int\limits_{\mathcal{B}} \, d\kappa \,  \chi\left(\frac{|\kappa|}{\varepsilon} < 1\right)e^{i \kappa\cdot  (\bx-\by)} \right]  \nonumber \\
&= \rev{\Phi_{\star}}^{\top} (\bx;\bks)\int\limits_{\mathbb{R}^n} \, d\by \, \overline{\rev{\Phi_{\star}} (\by;\bK)}f(\by)  \left[ \int\limits_{\mathbb{R}^n} \, d\kappa \,  \chi\left(\frac{|\kappa|}{\varepsilon} < 1 \right)e^{i\kappa \cdot (\bx-\by)} \right]  \nonumber 
\end{align*}
In all, we have that
 \begin{align}
 \Peps f  &=  u_{\varepsilon}[f] + \mathcal{O}_{L^2(\R^n)}\left(\varepsilon^{n+1} \|f\|_{L^2}\right) \, ,
 \label{texp1} \end{align}
 where
  \begin{align}
 u_\varepsilon[f](\bx) &\equiv  \Phi^{\top}(\bx;\bks) \beta_\varepsilon [f](\bx),\quad {\rm and}  \nonumber\\
 \beta_\varepsilon[f](\bx) &\equiv \left[\ \left( \overline{\rev{\Phi_{\star}}(\cdot;\bks)} f \right) \ast \mathcal{F}_{\bxi}^{-1}\left[ \chi \left( \frac{|\bxi|}{\varepsilon}<1 \right)\right]\ \right](\bx) \, ,
 \label{u-beta}
\end{align}
where $\mathcal{F}^{-1}[g](\bxi)$ denotes the inverse Fourier transform and $\ast$ denotes convolution. We next show that $u_\varepsilon \in \BL$ with $d_0=1$ by showing that 
$\mathcal{F}[\beta_\varepsilon [f] ] \in\chi (|\bxi|< \varepsilon)L^2(\R ^n) $.
 Indeed by the convolution rule,  \[
\mathcal{F}[ \beta_\varepsilon [f] ](\bxi) = \mathcal{F}\left[\left( \overline{\Phi(\cdot ;\bks)} f \right) \ast \mathcal{F}^{-1}\left[ \chi \left( \frac{|\bxi|}{\varepsilon}<1 \right)\right]\right] = \mathcal{F}[\overline{\rev{\Phi_{\star}}(\cdot ;\bks)} f](\bxi)  \, \,\chi\left(\frac{|\bxi|}{\varepsilon} <1 \right) ,
\]
which is supported in $\{|\bxi|<\varepsilon a\}$. This completes the proof of \eqref{eq:Peps2BL}.
\begin{remark}
This proof shows that, more generally, if the definition of $\Peps$ would have been changed to a \rev{projection} onto the disc $|\bk-\bks|<a \varepsilon$ with $a\neq 1$, then the Proposition would have carried through with a different value of $d_0$.
\end{remark}

\subsubsection{From wavepackets to projections; proof of \eqref{eq:BL2Peps}}\label{sec:bl2peps}

 Consider
$u(\bx)\in \BL$ for some $d_0 \in (0,1)$ and $\varepsilon > 0$ sufficiently small, then by definition of \eqref{eq:BL_def}, there exists $\alpha_{\varepsilon}\in L^2 (\R^n ;\C ^N)$ such that  
$$ u(\bx)=~\rev{\Phi_{\star}}^{\top}(\bx;\bks)\alpha_\varepsilon (\bx) \, ,\quad \textrm{where}\quad 
\mathcal{F}\left[ \alpha_\varepsilon \right](\bxi) = \chi\left(\frac{|\bxi|}{\varepsilon}<d_0\right)\mathcal{F}\left[\alpha_\varepsilon\right] (\bxi) \, . $$
 On the other hand, by \eqref{texp1}, for any $\BL$ function and $\varepsilon>0$ sufficiently small, there exists a function $\gamma_{\varepsilon}$ such that
\begin{equation}\label{eq:Proj_BL}
\Peps u =  \rev{\Phi^{\top}_{\star}}(\bx, \bks)\gamma_{\varepsilon}[u] (\bx) + \mathcal{O}(\varepsilon^{n+1}\|u\|_{L^2(\mathbb{R}^n)}) \, .
\end{equation}
To prove \eqref{eq:BL2Peps} it suffices to show that 
$\gamma_{\varepsilon}(\bx)= \alpha_\varepsilon(\bx)$. 
Substitution of 
 $u=\rev{\Phi_{\star}}^{\top}(\bx;\bks)\alpha_{\varepsilon}(\bx)$ into \eqref{u-beta} yields
$$ \gamma_{\varepsilon}[u] = 
\left(\overline{\rev{\Phi_{\star}}(\cdot;\bks)}\rev{\Phi_{\star}}^{\top}(\cdot;\bks)\alpha_\varepsilon \right) \ast\mathcal{F}^{-1}\left[\chi\left(\frac{|\bxi|}{\varepsilon}<1 \right)\right] \,  . $$
We next  compute the Fourier transform of $\gamma_{\varepsilon}[u]$. For $\rev{b_{\star}}\leq j < b+N$,
\begin{align}
    \mathcal{F}\left[ \gamma_{\varepsilon}[g]\right]_j &=\mathcal{F}[\overline{\rev{\Phi_{\star}}(\bx; \bks)}\rev{\Phi_{\star}}^{\top}(\bx; \bks)\alpha _{\varepsilon}( \bx) ]_j ~  \chi\left(\frac{|\bxi|}{\varepsilon}<1 \right) \nonumber\\
    &= \sum\limits_{\ell =b}^{N+b-1} \mathcal{F}[\overline{\Phi_j (\bx;\bks)}\Phi_{\ell}(\bx; \bks) \alpha_{\varepsilon, \ell}(\bx)] ~ \chi\left(\frac{|\bxi|}{\varepsilon}<1 \right) \label{eq:gammaFour_terms} 
\end{align}
Consider the expression being summed in \eqref{eq:gammaFour_terms}. Since $\Phi_j(\bx;\bks)=e^{i\bks\cdot\bx}\phi_j(\bx;\bks)$ with
 $\phi_j(\bx;\bks)\in L^2(\R^n/\Lambda)$ periodic, we have
  \begin{align*}
 p_{j,\ell}(\bx) \equiv \overline{\Phi_j(\bx, \bks)}\Phi_{\ell}(\bx, \bks) =  \overline{\phi_j(\bx, \bks)}\phi_{\ell}(\bx, \bks) \in L^2(\R^n/\Lambda) \, .
 \end{align*}
We expand $p_{j,\ell}(\bx)$ for each $j,\ell =1,2$ in a  Fourier series with respect to the lattice $\Lambda$: for every $g\in L^2 (\R^n / \Lambda)$, $$g(\bx) = \sum_{\bn \in \Lambda^{*}} \hat{g}(\bn) e^{i\bn \cdot \bx} \, , \qquad \hat{g}(\bn) \equiv \int\limits_{\Omega} e^{-i\bn \cdot \bx} g(\bx) \, d\bx \, .$$ Substituting the Fourier series into \eqref{eq:gammaFour_terms} yields
\begin{align*}
\mathcal{F}[\gamma_{\varepsilon}[u]]_j &= \sum\limits_{\ell = n}^{N+b-1}\mathcal{F} \left[ \sum\limits_{\bn \in \Lambda^*} \hat{p}_{j,\ell}(\bn)e^{i\bn \cdot x} \alpha_{\varepsilon,\ell }(\bx) \right]~  \, \, \chi\left(\frac{|\bxi|}{\varepsilon}<1 \right)\\
&= \sum\limits_{\ell = n}^{N+b-1}\sum\limits_{\bn \in \Lambda^*} \hat{p}_{j,\ell}(\bn) \mathcal{F} \left[  e^{i\bn \cdot x} \alpha_{\varepsilon, \ell}(\bx) \right] ~ \chi\left(\frac{|\bxi|}{\varepsilon}<1 \right)\\
&= \sum\limits_{\ell = n}^{N+b-1}\sum\limits_{\bn \in \Lambda^*} \hat{p}_{j,\ell}(\bn) \hat{\alpha}_{\varepsilon, \ell} \left(\bxi -  \bn\right) ~ \chi\left(\frac{|\bxi|}{\varepsilon}<1 \right) \numberthis \label{eq:gamma_sum} \\
\end{align*}
Note that by definition, $\hat{\alpha_{\varepsilon}}$ has compact support in the disc of radius $\varepsilon a $ around the origin. In the expansion above in \eqref{eq:gamma_sum}, for $\varepsilon>0$ sufficiently small, the only term that does not vanish only if (i) $|\bxi-\bn|<\varepsilon $ (with $\bn\in\Lambda^*$) and 
 (ii) due to the $\chi(|\bxi|<\varepsilon)$ term, if $|\bxi|<\varepsilon $. Hence, the only non-zero term in \eqref{eq:gamma_sum} arises from the lattice point $\bn = \vec 0$. Then, by definition of the Fourier coefficient $\hat{p}_{j,\ell} (\vec 0)$ and the orthogonality of the different $\Phi_j$'s, we have that
\begin{align*}
\mathcal{F}[\gamma_{\varepsilon}[u]]_j &= \sum\limits_{\ell = b}^{N+b-1} \hat{p}_{j,\ell}(\vec 0) \hat{\alpha}_{\varepsilon, \ell} \left(\bxi\right) ~ \cdot\chi\left(\frac{|\bxi|}{\varepsilon}<1 \right) \\
&= \sum\limits_{\ell = n}^{N+b-1} \int_\Omega \overline{\Phi_j(\by;\bks)}\Phi_l(\by;\bks) \,  d\by\
 \hat{\alpha}_{\varepsilon, \ell}\left(\bxi\right)~ \cdot\chi\left(\frac{|\bxi|}{\varepsilon}<1 \right)\\
 &= \hat{\alpha}_{\varepsilon, j}\left(\bxi\right)~ \cdot\chi\left(\frac{|\bxi|}{\varepsilon}<1 \right) = {\rm Vol}(\Omega)^{-1} \hat{\alpha}_{\varepsilon, j}\left(\bxi\right) \,.
 \end{align*}
 
 Summarizing, we have $\gamma _{\varepsilon}[u] =\alpha_\varepsilon $ 
  and  and so substitution into \eqref{eq:Proj_BL}  yields
   \begin{equation}\label{eq:Proj_BL1}
{\rm Proj}_{L^2(\mathbb{R}^2)} (|H^0-E_{\star}|\leq 
\varepsilon)u = u(\bx) + \mathcal{O}(\varepsilon^{n+1}\|u\|_{L^2(\mathbb{R}^2)}) \, .
\end{equation}
This is equivalent to \eqref{eq:BL2Peps}. The proof of Proposition \ref{prop:BLproj} is now complete.

\section{Effective transport dynamics}\label{sec:transport}
Consider \eqref{eq:lsa_concrete} with $a=1$ and initial data of the form
\begin{equation}\label{eq:wp_data}
\psi_0 (\bx) = \varepsilon ^{\frac{n}{2}} \alpha_0(\varepsilon \bx) \Phi_b (\bx; \bks) \, , \qquad \alpha_0 \in H^s(\R ^n) \, ,
\end{equation}
for sufficiently high $s>0$, and where the $\varepsilon ^{\frac{n}{2}}$ factor keeps the overall norm of $\psi_0$ independent of $\varepsilon$. Initial data in $\BL$ is then a sub-class of \eqref{eq:wp_data}. In this subsection, we formally derive the effective transport equation and its propagator $U_{\rm eff}^{\varepsilon}$, as given in \eqref{eq:tr-effH}. The proof of its validity follows closely that of \cite[Theorem 3.2]{sagiv2022effective} and \cite[Theorem 5.1]{hameedi2022radiative}, and is therefore omitted from this manuscript.

To construct a solution, we assume separation of scales, with slow time variables
$$T\equiv \varepsilon t \, , \quad X \equiv \varepsilon \bx \, , $$
 and introduce the expansion $$\psi(t,x) = \psi^0 (t,x) + \varepsilon \psi^1 (t,x) + \cdots $$
 where for every $j\geq 0$ $$\psi^j (t,x) = \Psi ^j (t,x, T,X) |_{T=\varepsilon t, X = \varepsilon x } \, .$$ 
By expanding 
$$\partial_t \mapsto \partial_t +\varepsilon T \, , \quad \vec \nabla \mapsto \vec \nabla _{\bx} + \varepsilon \vec \nabla _X  \, , \quad \Delta \mapsto \Delta _{\bx} + 2\varepsilon\vec \nabla _{\bx} \cdot \vec \nabla _X +\Delta _X \, .$$
and substituting into \eqref{eq:lsa_bal}, we solve for each power of $\varepsilon$. 

\textbf{Order $\varepsilon^0$.}
$$(i\partial_t - H^0)\Psi^0 = 0 \, ,\qquad \Psi(t=0,T=0,x,X) = \alpha_0(X)\Phi (\bx) \, ,$$
and so $\Psi^0(t,T,\bx,X) = \alpha_0(X)\Phi (\bx)$.

\textbf{Order $\varepsilon^1$.}
$$(i\partial_t -H^0)\Psi^1 = \left(-i\partial_T +2\vec \nabla _{\bx} \cdot \vec \nabla _X +2i\uA (T)\cdot \vec \nabla _{\bx}\right) \Psi^0 \, .$$
To invert $(i\partial_t - H^0)$ in $L^2_{\bks}$ and solve for $\Psi^1$, we need to verify that the right hand side is $L^2_{\bk}$ orthogonal to the kernel, i.e., to $\Phi = \Phi_b(\cdot;\bk)$ (from here on, we suppress the $\bks$ and $b$ dependence for brevity).

 Here, it is useful to note that \eqref{eq:ballisticAss} is equivalent to a statement on the Bloch mode $\Phi_b(x;\bks)$:
\begin{lemma}\label{lem:c_slope}
Given \eqref{eq:ballisticAss}, then 
\begin{equation}
\label{eq:ballisticAss_ip}
\langle \Phi_b (\cdot;\bks) , 2\vec \nabla \Phi_b (\cdot;\bks) \rangle_{L^2_{\bks}} = i\bc \neq \vec 0 \, .
\end{equation}
\end{lemma}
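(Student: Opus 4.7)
The plan is to recognize \eqref{eq:ballisticAss_ip} as a Feynman--Hellmann identity for the Bloch eigenvalue problem associated with $H^0$ at $\bk=\bks$. The hypothesis \eqref{eq:ball_slopeCond} concerns the group velocity $\nabla_{\bk}E_b(\bks)$, and the inner product on the left of \eqref{eq:ballisticAss_ip} is exactly what first-order $\bk$-perturbation theory produces. So my approach is to carry out that perturbation computation directly on the periodic part of $\Phi_b$.

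First I would pass to the unitarily equivalent periodic eigenvalue problem. Setting $\phi_b(\bx;\bk)\equiv e^{-i\bk\cdot\bx}\Phi_b(\bx;\bk)\in L^2(\R^n/\Lambda)$ and $H(\bk)\equiv e^{-i\bk\cdot\bx}H^0 e^{i\bk\cdot\bx}=-(\nabla+i\bk)^2+V$, the eigenvalue equation reads $H(\bk)\phi_b=E_b(\bk)\phi_b$ on $L^2(\R^n/\Lambda)$, and the family $\bk\mapsto H(\bk)$ is a polynomial (hence smooth) perturbation of $H(\bks)$. Differentiating the eigenvalue equation in $k_j$, pairing with $\phi_b$, and using self-adjointness of $H(\bk)$ together with the normalization $\|\phi_b\|_{L^2(\R^n/\Lambda)}=1$ to cancel the term proportional to $\langle\phi_b,\partial_{k_j}\phi_b\rangle$ yields the Hellmann--Feynman relation
\[
\partial_{k_j}E_b(\bk)\;=\;\bigl\langle\phi_b,\,\partial_{k_j}H(\bk)\,\phi_b\bigr\rangle_{L^2(\R^n/\Lambda)}\;=\;\bigl\langle\phi_b,\,(-2i\partial_j+2k_j)\phi_b\bigr\rangle,
\]
where I used $\partial_{k_j}H(\bk)=-2i\partial_j+2k_j$.

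Next I would translate the right-hand side back to an expression in $\Phi_b$. From $\phi_b=e^{-i\bk\cdot\bx}\Phi_b$ one has $\partial_j\phi_b=e^{-i\bk\cdot\bx}(\partial_j-ik_j)\Phi_b$; the integrand $\overline{\phi_b}\partial_j\phi_b$ equals $-ik_j|\Phi_b|^2+\overline{\Phi_b}\partial_j\Phi_b$, which is $\Lambda$-periodic because the pseudoperiodic phases in $\Phi_b$ and $\overline{\Phi_b}$ cancel; integrating over a fundamental domain gives
\[
\langle\phi_b,\partial_j\phi_b\rangle_{L^2(\R^n/\Lambda)}\;=\;-ik_j\,+\,\langle\Phi_b,\partial_j\Phi_b\rangle_{L^2_{\bk}}.
\]
Substituting, the $2k_j$ term from $\partial_{k_j}H$ cancels the $-2i(-ik_j)=-2k_j$ contribution, leaving the clean identity $\nabla_{\bk}E_b(\bk)=-2i\,\langle\Phi_b,\nabla\Phi_b\rangle_{L^2_{\bk}}$, i.e.,
\[
\langle\Phi_b(\cdot;\bk),\,2\vec{\nabla}\Phi_b(\cdot;\bk)\rangle_{L^2_{\bk}}\;=\;i\,\nabla_{\bk}E_b(\bk).
\]
Setting $\bk=\bks$ and inserting the definition $\bc=-\nabla_{\bk}E_b(\bks)$ from \eqref{eq:ball_slopeCond} gives \eqref{eq:ballisticAss_ip}, and $\bc\neq\vec 0$ is precisely the hypothesis.

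I do not anticipate a serious obstacle: this is standard first-order perturbation theory for a smooth self-adjoint family, and the only care needed is the bookkeeping when intertwining the $L^2_{\bk}$ and $L^2(\R^n/\Lambda)$ inner products (in particular, justifying the periodicity of $\overline{\Phi_b}\partial_j\Phi_b$ so that the fundamental-cell integrals are well-defined). The only ambiguity is the overall sign, which depends on the convention for linearity of $\langle\cdot,\cdot\rangle$ and matches the statement with the convention fixed in \eqref{eq:ball_slopeCond}.
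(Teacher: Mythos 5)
Your proof is correct and is essentially the paper's own argument: both pass to the periodic Bloch problem $H(\bk)p_{\bk}=E_b(\bk)p_{\bk}$ (the paper's $\hat H(\bk)=H-2i\bk\cdot\grad_x+|\bk|^2$ in \eqref{eq:tise_k}), differentiate in $\bk$, use self-adjointness and the normalization to obtain the Hellmann--Feynman identity, and then intertwine back to $\Phi_b$ to identify $\langle\Phi_b,2\grad\Phi_b\rangle_{L^2_{\bks}}$ with $\pm i\grad_{\bk}E_b(\bks)$. The sign ambiguity you flag (sesquilinearity convention and the minus sign in \eqref{eq:ball_slopeCond}) is present in the paper's computation as well, so no further work is needed beyond fixing that convention once.
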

Combining Lemma \ref{lem:c_slope} and normalizing $\langle \Phi, \Phi\rangle_{L^2_{\bks}} = 1$, we get the desired result
$$
i\partial_T \alpha(T,X) = i\bc \cdot \left(\vec \nabla _X +i\uA(T) \right)\alpha \, . 
$$
\begin{proof}[Proof of Lemma \ref{lem:c_slope}]
By definition, $\Phi$ satisfies 
$$ H \Phi (\bx; \bk) = E(\bk) \Phi \, , \qquad \Phi \in L^2_{\bk} \, .$$
Write  
\begin{equation}\label{eq:pkdef}
\Phi(\bx;\bk) = e^{i\bk \cdot \bx} p_{\bk}(\bx) \, , \qquad p_{\bk} \in L^2_{\rm per}(\Omega) \, ,
\end{equation}
which transforms the TISE to 
\begin{equation}\label{eq:tise_k} \hat{H}({\bk}) p_{\bk} = E(\bk) p_{\bk} \, , \qquad \hat{H}(\bk) \equiv H -2i\bk \cdot \grad_x + |\bk|^2 \, .  
\end{equation}
We now take $\grad _k$ on both sides of \eqref{eq:tise_k}.
By noting that 
\begin{equation*}
\grad _k \hat{H}(\bk) =- 2i \grad _x +2 \bk \, ,
\end{equation*}
we get that
\begin{equation*} 
 \hat{H}(\bk) \grad _k p_{\bk} - 2i \grad_x p_{\bk} + 2\bk p_{\bk} = \grad _k E (\bk) p_{\bk} + E(\bk) \grad _k p_{\bk} \, .  
\end{equation*}
We rearrange some of the terms and take the inner product from the left with $p_{\bk}$
$$ \langle p_{\bk} ,p_{\bk} \grad_k  E(\bk) \rangle = \left\langle p_{\bk}, \left( \hat{H}(\bk) - E(\bk) \right) \grad _k p_{\bk} \right\rangle - \langle p_{\bk}, 2i\grad _x p_{\bk} \rangle + \langle p_{\bk} , 2\bk p_{\bk} \rangle \, .$$
Here we note that, by definition $\|\phi (\cdot; \bk) \|_2 = \|p_{\bk} \|_2 =1 $. Moreover, since $\hat{H}(\bk)-E(\bk)$ is self-adjoint, combined with \eqref{eq:tise_k}, the first inner product on the right-hand side vanishes. By differentiating \eqref{eq:pkdef}, we get
\begin{align*}
\grad _k E(\bk) &= \langle p_{\bk}, 2(-i\grad _x +\bk)p_{\bk} \rangle \\
&= \left\langle \Phi (\bx ;\bk)e^{-i\bk \cdot \bx}, -2i\grad _x \left( \Phi(\bx;\bk)\right) e^{-i\bk \cdot \bx} \right\rangle\\
&= -\langle \Phi(\cdot ; \bk), 2i \grad _x \Phi(\cdot ; \bk) \rangle \, .
\end{align*}
\end{proof}

\section*{Data Availability Statement}
Data sharing not applicable to this article as no datasets were generated or analyzed during the current study.

\appendix
\section{Physical interpretations of the model}\label{sec:phys}
%
  An example of physical interest is the case of \eqref{eq:lsa_concrete}, i.e.,  \eqref{eq:lsa_bal} with $W(T,-i\nabla) = -2i\uA(T)\cdot \nabla$. 
Note here that \eqref{eq:lsa_concrete} can be transformed to an equivalent  ``magnetic'' form 
 \[ i\partial_t\psi = ( i\grad +\varepsilon^a \uA (\varepsilon^a t)  )^2 \psi + V\psi ,\]
 where $\uA$ is a vector potential. 
  This class of PDEs arises in physical settings, such as:
\begin{enumerate}[label=(\alph*)]
\item The modeling of time periodic conductors (e.g., graphene), excited by a time-varying electric field  \cite{perez2014floquet, wang2013observation}. Here, $H^0=-\Delta+V$ is a single-electron Hamiltonian for graphene and time-dependence in $H^\varepsilon(t)$ models the excitation of the graphene sheet by an external electric field with no magnetic field (by Maxwell's equations, since $\uA$ is constant in space, see e.g., \cite{krieger1986time}).

\item For $n=1,2$,
the propagation of light in a periodic array of  helically coiled optical fiber waveguides \cite{bellec2017non, jurgensen2021quantized, ozawa2019topological, Rechtsman-etal:13}. Here, the Schr{\"o}dinger equation describes the propagation in the time-like longitudinal direction of a continuous-wave (CW) laser beam propagating through a hexagonal or triangular transverse array of optical fiber waveguides. Beginning with Maxwell's equations, under the nearly monochromatic and paraxial approximations, one obtains $i\psi_z (z,\bx)= H^0 \psi$ for the longitudinal evolution of the slowly varying envelope of the classical electric field. Suppose the fibers are longitudinally coiled. Then, in a rotating coordinate frame, we obtain \eqref{eq:lsa_bal} where the time-periodic perturbation, $\uA$, captures effect of periodic coiling.
\end{enumerate}

\appendix
\section{Auxilary proofs}
\subsection{Proof of Lemma \ref{lem:BL_orth}}\label{ap:bl_orth_pf}
\rev{It suffices to prove that $\BL$ is a closed subspace of $L^2(\mathbb{R}^n)$. Let $({\alpha}_m^{\top}(\varepsilon \cdot){\Phi})_{m=1}^{\infty}\subset L^2 (\R^n) $ be a sequence  converging to some $u\in L^2(\mathbb{R}^n)$, and then let us show that $u\in \BL$, too.
To do so, we first prove that  $(\alpha_m)_{m=1}^{\infty}$ is a Cauchy sequence in $L^2(\mathbb{R}^n;\mathbb{C}^N)$.\footnote{Recall that $N\geq 1$ is the degree of the degeneracy at $(\bks,E_{\star})$, see Hypothesis \ref{hyp:deg}.}  This is a consequence of the  averaging lemma, Lemma \ref{lem:avg}. Indeed, choose for any $1\leq j,l,\leq N$, and every two indexes $m_1, m_2\geq 1$,
 \[ q_{j,l}(X)= \overline{(\alpha_{j,m_1} - \alpha_{j,m_2})(X)} (\alpha_{l,m_1} - \alpha_{l,m_2})(X) \, ,\qquad  p_{j,l}(\bx)= \overline{\Phi_j(\bx)}\Phi_l(\bx) \, .\] 
  Since $\{\Phi_1,\ldots , \Phi_N\}$ is an orthonormal set in $L^2(\Omega)$,  for all $\varepsilon>0$ sufficiently small, and every two indexes $m_1, m_2$:
  \begin{align*}
  \int_{\mathbb{R}^n}\big| \left(\alpha_{m_1} (\varepsilon \bx) - \alpha_{m_2} (\varepsilon \bx)\right)^{\top}\Phi(\bx)\big|^2\ d\bx &= \frac{\varepsilon^{-n}}{{\rm vol}(\Omega)} \sum\limits_{j,l=1}^N \left(\int\limits_{\Omega} p_{j,l}(\bx) \, d\bx\right) \cdot \left(~ \int\limits_{\mathbb{R}^n} q_{j,l}(X) \, dX\right) \\
  &= \frac{\varepsilon^{-n}}{{\rm vol}(\Omega)} \sum\limits_{j,l=1}^N \delta _{j,l} \cdot \left( ~\int\limits_{\mathbb{R}^n} q_{j,l}(X) \, dX\right) \\
  &=\frac{\varepsilon^{-n}}{{\rm vol}(\Omega)}\|\alpha_{m_1} - \alpha_{m_2} \|_2^2 . 
   \end{align*}
Since the left hand side tends to zero as $m_1$ and $m_2$ tend to infinity, so does the right hand side. Therefore, 
 $(\alpha_m)_{m=1}^{\infty}$ is a  Cauchy sequence and converges in $L^2(\R^n ; \C ^N)$ to some $\alpha_\star$. Hence, 
 the sequence $\{ \varepsilon \alpha_m^{\top}(\varepsilon \cdot) \Phi\}$ converges to
  $\varepsilon \alpha_\star^{\top}(\varepsilon \cdot) \Phi$ . }

  \rev{Finally, while the limit does have the correct form, for $u$ to be in $\BL$ we still need to show that ${\rm supp}(\hat\alpha_\star)\subset \{|\bxi|\le d_0\}$ to complete the proof.  Indeed, since   $(\alpha_m)_{m=1}^{\infty}$ converges in $L^2 $,  by the Plancherel identity. Since all $\hat{\alpha}_m$ are complactly supported on the same compact set, it mus be that the sequence
  $\hat\alpha_m(\bxi)$ converges almost everywhere in $\xi$, up to a subsequence. Furthermore, for all $m$ we have $\hat\alpha_m(\bxi)\equiv0$ for $|\xi|>d_0$, so  we conclude $\hat\alpha_\star(\bxi)\equiv0$ for $|\bxi|>d_0$.}

\subsection{Proof of Lemma \ref{lem:avg}}\label{ap:avg_pf}
  
\rev{Since the fundamental cell of the lattice $\Omega$ tiles the plane, we partition $\mathbb{R}^n$ with respect to the lattice, i.e., $\mathbb{R}^n = \bigcup_{\bm\in \Lambda} (\Omega +\bm)$.
Therefore 
\begin{align*}
\int\limits_{\mathbb{R}^2}p(\bx) 	q(\varepsilon \bx) \, d\bx &= \sum\limits_{\bm \in \Lambda} \, \, \int\limits_{\Omega + \bm}  p(\bx)q(\varepsilon \bx) \, d\bx \\
(\text{change of variables} ~~ \bx=\by+\bm)\qquad  \qquad  &=\sum\limits_{\bm \in \Lambda}\, \, \int\limits_{\Omega} p(\by) q(\varepsilon(\by + \bm)) \, d\by \\
&=\int\limits_{\Omega} p(\by)\left[ \sum\limits_{\bm \in \Lambda} q(\varepsilon(\by + \bm))\right] \, d\by \, . \numberthis \label{eq:gpre_pois}
\end{align*}
Using a generalization of Poisson summation formula for general lattices \cite{stein2016introduction}, then
$$\sum\limits_{\bm \in \Lambda} q(\by + \bm) = \frac{1}{{\rm vol}(\Omega)}\sum\limits_{\bn \in \Lambda} \hat{q}(\bn)e^{2\pi i  \by\cdot \bn} \, ,$$
where $\hat{q}$ is the Fourier transform of $q$. Since in $\R ^n$ we have that $\widehat{q(\varepsilon \cdot)}(\bxi) = \varepsilon^{-n} \hat{q}(\varepsilon ^{-1}\bxi )$, then 
\begin{align*}
\sum\limits_{\bm \in \Lambda} q(\varepsilon(\by + \bm)) &= \frac{1}{{\rm vol}(\Omega)}  \sum\limits_{\bn \in \Lambda} \varepsilon^{-n}\hat{q}\left(\frac{\bn}{\varepsilon}\right)e^{2\pi i  \by\cdot \bn} \, .
\end{align*}
Now, since $q$ is band-limited, then for sufficiently small $\varepsilon$ all of the terms in the last sum vanish but $\bn=(0,\ldots ,0)$. Therefore, and since $q$ is integrable,
$$\sum\limits_{\bm \in \Lambda} q(\varepsilon(\by + \bm))= \frac{1}{{\rm vol}(\Omega)}\varepsilon^{-n}\hat{q}(0) = \frac{\varepsilon^{-n}}{{\rm vol}(\Omega)}\int\limits_{\mathbb{R}^2} q(\bx) \,d\bx \, , $$ which when substituted into \eqref{eq:gpre_pois}, yields
\begin{align*}
\int\limits_{\mathbb{R}^2}p(\bx) q(\varepsilon \bx) \, d\bx = \cdots &= \int\limits_{\Omega} p(\by)\left[ \frac{\varepsilon^{-n}}{{\rm vol}(\Omega)}\int\limits_{\mathbb{R}^2} q(\bx)\, d\bx \right] \, d\by \\
&= \frac{\varepsilon^{-n}}{{\rm vol}(\Omega)}\left( \int\limits_{\Omega} p(\by)\,d\by \right) \cdot \left( \int\limits_{\mathbb{R}^2} q(\bx) \, d\bx \right) \, . 
\end{align*}
}

\bibliographystyle{abbrv}
\bibliography{floquetBib}
\end{document}